\theoremstyle{change}%
\newtheorem{definition}{Definition:}[section]%
\newtheorem{theorem}[definition]{Theorem:}%
\newtheorem{lemma}[definition]{Lemma:}% 
\newtheorem{corollary}[definition]{Corollary:}%
{\theorembodyfont{\rmfamily} \newtheorem{remark}[definition]{Remark:}}%
{\theorembodyfont{\rmfamily} \newtheorem{example}[definition]{Example:}}%
\newcommand{\CC}{\mathcal{C}}%
\newcommand{\EC}{\mathcal{E}}%
\newcommand{\FC}{\mathcal{F}}%
\newcommand{\MC}{\mathcal{M}}%
\newcommand{\PC}{\mathcal{P}}%
\newcommand{\QC}{\mathcal{Q}}%
\newcommand{\RC}{\mathcal{R}}%
\newcommand{\T}{\mathbb{T}}%
\newcommand{\N}{\mathbb{N}}%
\newcommand{\Z}{\mathbb{Z}}%
\newcommand{\R}{\mathbb{R}}%
\newcommand{\ep}{\varepsilon}%
\newcommand{\tm}{\times}%
\newcommand{\rmd}{\mathrm{d}}%
\newcommand{\rmD}{\mathrm{D}}%
\newcommand{\rme}{\mathrm{e}}%
\newcommand{\tp}{\mathrm{top}}%
\newcommand{\spn}{\mathrm{span}}%
\newcommand{\sep}{\mathrm{sep}}%
\newcommand{\diam}{\mathrm{diam}}%
\newcommand{\supp}{\mathrm{supp}}%
\newcommand{\rmS}{\mathrm{S}}%
\begin{document}

\title{On optimal coding of non-linear dynamical systems}

\author{Christoph Kawan and Serdar Y\"{u}ksel\thanks{C.~Kawan is with the Faculty of Computer Science and Mathematics, University of Passau, 94032 Passau, Germany (e-mail: christoph.kawan@uni-passau.de). S.~Y\"uksel is with the Department of Mathematics and Statistics, Queen's University, Kingston, Ontario, Canada, K7L 3N6 (e-mail: yuksel@mast.queensu.ca).}}%
\date{}%
\maketitle%

\begin{abstract}
We consider the problem of zero-delay coding of a dynamical system over a discrete noiseless channel under three estimation criteria concerned with the low-distortion regime. For these three criteria, formulated stochastically in terms of a probability distribution for the initial state, we characterize the smallest channel capacities above which the estimation objectives can be achieved. The results establish further connections between topological and metric entropy of dynamical systems and information theory.%
\end{abstract}

{\small {\bf Keywords:} Source coding; state estimation; non-linear systems; topological entropy; metric entropy; dynamical systems}%

{\small {\bf AMS Classification:} 93E10, 37A35, 93E99, 94A17, 94A29}%

%94A17   	Measures of information, entropy
%93E99   	Systems theory; control
%37A35   	Entropy and other invariants, isomorphism, classification

\section{Introduction}%

In this paper, we develop further connections between the ergodic theory of dynamical systems and information theory, in the context of zero-delay coding and state estimation for non-linear dynamical systems, under three different criteria. In the following, we first introduce the problems considered in the paper, and present a comprehensive literature review.

\subsection{The Problem}

Let $f:X\rightarrow X$ be a homeomorphism (a continuous map with a continuous inverse) on a compact metric space $(X,d)$. We consider the dynamical system generated by $f$, i.e.,%
\begin{equation*}
  x_{t+1} = f(x_t),\quad x_0 \in X,\ t \in \Z.%
\end{equation*}
The space $X$ is endowed with a Borel probability measure $\pi_0$ which describes the uncertainty in the initial state.%

Suppose that a sensor measures the state at discrete sampling times, and a coder translates the measurements into symbols from a finite coding alphabet $\MC$ and sends this information over a noiseless digital channel to an estimator. The input signal sent at time $t$ through the channel is denoted by $q_t$. At the other end of the channel, the estimator generates an estimate $\hat{x}_t \in X$ of $x_t$, using the information it has received through the channel up to time $t$. We consider arbitrary (but causal / zero-delay) coding and estimation policies. That is, $q_t$ is generated by a (measurable) map%
\begin{equation}\label{eq_coder_map}
  q_t = \delta_t(x_0,x_1,\ldots,x_t),\quad \delta_t:X^{t+1} \rightarrow \MC,%
\end{equation}
and the estimator output at time $t$ is given by a map%
\begin{equation}\label{eq_estimator_map}
  \hat{x}_t = \gamma_t(q_0,q_1,\ldots,q_t),\quad \gamma_t:\MC^{t+1} \rightarrow X.%
\end{equation}
Throughout the paper, we assume that the channel is noiseless with finite input alphabet $\MC$. Hence, its capacity is given by%
\begin{equation*}
  C = \log_2|\MC|.%
\end{equation*}

Our aim is to characterize the smallest capacity $C_0$ above which one of the following estimation objectives can be achieved for every $\ep>0$.%

\begin{enumerate}
\item[(E1)] Eventual almost sure estimation: There exists $T = T(\ep) \in \N$ s.t.%
\begin{equation*}
  P(d(x_t,\hat{x}_t) \leq \ep) = 1 \mbox{\quad for all\ } t \geq T.%
\end{equation*}
\item[(E2)] Asymptotic almost sure estimation:%
\begin{equation*}
  P\bigl(\limsup_{t\rightarrow\infty} d(x_t,\hat{x}_t) \leq \ep\bigr) = 1.%
\end{equation*}
\item[(E3)] Asymptotic estimation in expectation:%
\begin{equation*}
  \limsup_{t\rightarrow\infty}E[d(x_t,\hat{x}_t)^p] \leq \ep, \quad p > 0.%
\end{equation*}
\end{enumerate}

The smallest channel capacity above which one of these objectives can be achieved for a fixed $\ep>0$ will be denoted by $C_{\ep}$. We are particularly interested in%
\begin{equation*}
  C_0 := \lim_{\ep\downarrow0}C_{\ep}.%
\end{equation*}

In most of our results we will additionally assume that $\pi_0$ is $f$-invariant, i.e., $\pi_0 = \pi_0 \circ f^{-1}$. Sometimes we will need the stronger assumption that $\pi_0$ is ergodic.%

Given an encoding and decoding policy, the estimate $\hat{x}_t$ at time $t$ is determined by the initial state $x_0$, e.g., $\hat{x}_1 = \gamma_1(\delta_0(x_0),\delta_1(x_0,f(x_0)))$. To express this, we sometimes write $\hat{x}_t(x_0)$. In terms of the map $f$ and the measure $\pi_0$, the estimation criteria described above can be reformulated as follows.%

Eventual almost sure estimation:%
\begin{equation}\label{eq_E1}
  \pi_0( \{ x_0 \in X : d(f^t(x_0),\hat{x}_t(x_0)) \leq \ep \} ) = 1 \mbox{\quad for all\ } t \geq T.%
\end{equation}
 
Asymptotic almost sure estimation:%
\begin{equation}\label{eq_E2}
  \pi_0(\{x_0\in X : \limsup_{t\rightarrow\infty} d(f^t(x_0),\hat{x}_t(x_0)) \leq \ep \}) = 1.%
\end{equation}

Asymptotic estimation in expectation:%
\begin{equation}\label{eq_E3}
  \limsup_{t\rightarrow\infty}\int_X \pi_0(\rmd x_0) d(f^t(x_0),\hat{x}_t(x_0))^p \leq \ep.%
\end{equation}

It is easy to see that \eqref{eq_E1} implies both \eqref{eq_E2} and \eqref{eq_E3}, and in turn, \eqref{eq_E2} implies \eqref{eq_E3} due to the compactness of $X$. We will see that this order is monotonically reflected in the conditions required on the information transmission rates for the criteria to be satisfied.%

\subsection{Literature Review}

The results developed and the setup considered in this paper are related to the following three general areas.

{\bf Relations between dynamical systems, ergodic theory, and information theory.}
There has been a mutually beneficial relation between the ergodic theory of dynamical systems and information theory (see, e.g., \cite{ShieldsIT,Downarowicz} for comprehensive reviews). Information-theoretic tools have been very effective in answering many questions on the behavior of dynamical systems, for example the metric (also known as Kolmogorov-Sinai or measure-theoretic) entropy is crucial in the celebrated Shannon-McMillan-Breiman theorem as well as two important representation theorems: Ornstein's (isomorphism) theorem and the Krieger's generator theorem \cite{GrayProbabilit,Orn,Ornstein,katok2007fifty,Downarowicz}. The concept of sliding block encoding \cite{GrayIT} can be viewed as a stationary encoding of a dynamical system defined by the shift process, leading to fundamental results on the existence of stationary codes which perform as good as the limit performance of a sequence of optimal block codes. For topological dynamical systems, the theory of entropy structures and symbolic extensions answers the question to which extent a system can be represented by a symbolic system (under preservation of some topological structure), cf.~\cite{Downarowicz} for an overview of this theory. Entropy concepts, as is well-known in the information theory community, have extensive operational practical usage in identifying fundamental limits on source and channel coding for a very large class of sources \cite{ShieldsIT}. We refer the reader to \cite{GrayIT} and \cite{GrayNeuhoff} for further relations and discussions which also include information-theoretic results for processes which are not necessarily stationary.%

In this paper, we will provide further connections between the ergodic theory of dynamical systems and information theory by answering the problems posed in the previous section and relating the answers to the concepts of either metric or topological entropy. As we note later in the paper, some of the results presented here can be seen in analogy to certain fundamental results of ergodic theory about representing and approximating a dynamical system by a symbolic system, i.e., a subshift of a full shift on $n$ symbols.%

{\bf Zero-delay coding.}
In our setup, we will impose causality as a restriction in coding and decoding. Structural results for a finite horizon coding problem of a Markov source have been developed in a number of important papers, including the classic works by Witsenhausen \cite{Witsenhausen} and Walrand and Varaiya \cite{WalrandVaraiya} with extensions by Teneketzis \cite{Teneketzis}. The findings of \cite{WalrandVaraiya} have been generalized to continuous sources in \cite{YukIT2010arXiv} (see also \cite{YukLinZeroDelay} and \cite{BorkarMitterTatikonda}, where the latter imposes a structure apriori); and the structural results on optimal fixed-rate coding in \cite{Witsenhausen} and \cite{WalrandVaraiya} have been shown to be applicable to setups when one also allows for variable-length source coding in \cite{KMe1}. Related work also includes \cite{wood2016optimal,MahTen09,AsnaniWeissman,javidi2013dynamic} which have primarily considered the coding of discrete sources. \cite{wood2016optimal,YukLinZeroDelay,AsnaniWeissman,BorkarMitterTatikonda} have considered infinite horizon problems and in particular \cite{wood2016optimal} has established the optimality of stationary and deterministic policies for finite aperiodic and irreducible Markov sources. A related lossy coding procedure was introduced by Neuhoff and Gilbert \cite{NeuhoffGilbert}, called {\it causal source coding}, which has a different operational definition since delays in coding and decoding are allowed so that efficiencies through entropy coding can be utilized (though the causality condition is still preserved). We refer the reader to \cite{KMe2, KMe3} for further setups on zero-delay or causal coding in the multi-user as well as secure communications contexts. Further discussions on the literature are available in \cite{KMe1,YukLinZeroDelay,AsnaniWeissman}.

Among those that are most relevant to our paper is \cite{LinderZamir} where causal coding under a high rate assumption for stationary sources and individual sequences was studied. Linder and Zamir, in \cite{LinderZamir}, among many other results, established asymptotic quantitative relations between the differential entropy rate and the entropy rate of a uniformly and memorylessly quantized stationary process, and through this analysis established the near optimality of uniform quantizers in the low distortion regime. This result does not apply to the system we consider, since this system has a differential entropy rate of $-\infty$.%

{\bf Networked control.} In networked control, there has been an interest in identifying limitations on state estimation under information constraints. The results in this area have typically involved linear systems, and in the non-linear case the studies have only been on deterministic systems estimated/controlled over deterministic channels, with few exceptions. State estimation over discrete noiseless channels was studied in \cite{Matveev,MatveevSavkin} for linear discrete-time systems in a stochastic framework with the objective to bound the estimation error in probability. In these works, the inequality%
\begin{equation}\label{eq_dataratethm}
  C \geq H(A) := \sum_{\lambda \in \sigma(A)}\max\{0,n_{\lambda}\log|\lambda|\}%
\end{equation}
for the channel capacity $C$ was obtained as a necessary and almost sufficient condition. Here $A$ is the dynamical matrix of the system and the summation is over its eigenvalues $\lambda$ with multiplicities $n_{\lambda}$. This result is also in agreement with related data-rate theorem bounds established earlier in the literature \cite{Brockett,TatikondaThesis,NairEvans}. Some relevant studies that have considered non-linear systems are the following. The papers \cite{liberzon2016entropy}, \cite{pogromsky2011topological} and \cite{MPo} studied state estimation for non-linear deterministic systems and noise-free channels. In \cite{liberzon2016entropy}, Liberzon and Mitra characterized the critical data rate $C_0$ for exponential state estimation with a given exponent $\alpha\geq0$ for a continuous-time system on a compact subset $K$ of its state space. As a measure for $C_0$, they introduced a quantity called estimation entropy $h_{\mathrm{est}}(\alpha,K)$, which equals the topological entropy on $K$ in case $\alpha=0$, but for $\alpha>0$ is no longer a purely topological quantity. The paper \cite{kawan2016state} provided a lower bound on $h_{\mathrm{est}}(\alpha,K)$ in terms of Lyapunov exponents under the assumption that the system preserves a smooth measure. In \cite{MPo}, Matveev and Pogromsky studied three estimation objectives of increasing strength for discrete-time non-linear systems. For the weakest one, the smallest bit rate was shown to be equal to the topological entropy. For the other ones, general upper and lower bounds were obtained which can be computed directly in terms of the linearized right-hand side of the equation generating the system. A further related paper is \cite{Savkin06}, which studies state estimation for a class of non-linear systems over noise-free digital channels. Here also connections with topological entropy are established.%

In recent work \cite{kawanYukselisit17} and \cite{KYu}, we considered the state estimation problem for non-linear systems driven by noise and established negative bounds due to the presence of noise by viewing the noisy system as an infinite-dimensional random dynamical system subjected to the shift operation. In this paper, we relax the presence of noise, and are able to obtain positive results involving both the metric and the topological entropy.%

A related problem is the control of non-linear systems over communication channels. This problem has been studied in few publications, and mainly for deterministic systems and/or deterministic channels. Recently, \cite{yuksel2015stability} studied stochastic stability properties for a more general class of stochastic non-linear systems building on information-theoretic bounds and Markov-chain-theoretic constructions, however these bounds do not distinguish between the unstable and stable components of the tangent space associated with a dynamical non-linear system \cite[Sec.~4]{KYu}, except for the linear system case. Our paper here provides such a refinement, but only for estimation problems and in the low-distortion regime. %In particular, the paper \cite{yuksel2015stability} highlights the limitations of the entropy-based methods for such problems; see also the discussion in \cite[Sec.~4]{KYu}.%

{\bf Contributions.} In view of this literature review, we make the following contributions. We establish that for (E1), the topological entropy on the support of the measure $\pi_0$ essentially provides upper and lower bounds, and for (E3) the metric entropy essentially provides the relevant figure of merit for both upper and lower bounds. For (E2), the lower bound is provided by the metric entropy, whereas the upper bound can be given by either the topological entropy or by the metric entropy depending on the properties of the dynamical system. Through the analysis, we provide further connections between information theory and dynamical systems by identifying the operational usage of entropy concepts for three different estimation criteria. We obtain explicit bounds on the performance of optimal zero-delay codes; and regarding the contributions in networked control, we provide refined upper and lower bounds when compared with the existing literature.%

{\bf Organization of the paper.} In Section \ref{sec_preliminaries}, we introduce notation and recall relevant entropy concepts and related results from ergodic theory. Sections \ref{sec_e1}, \ref{sec_e2} and \ref{sec_e3} contain our main results on lower and upper estimates of the channel capacity for the estimation objectives (E1)--(E3). Section \ref{sec_discussion} provides a discussion of our results and relates them to similar results in the literature. In Section \ref{sec_corex}, we formulate some corollaries for systems in the differentiable category, relating the critical channel capacities to Lyapunov exponents, and we also present several examples. Finally, some fundamental theorems in ergodic theory used in our proofs are collected in the appendix, Section \ref{sec_appendix}.%

\section{Notation and review of relevant results from dynamical systems}\label{sec_preliminaries}%

Throughout the paper, all logarithms are taken to the base $2$. We define $\log \infty := \infty$. By $|A|$ we denote the cardinality of a set $A$. If $(X,d)$ is a metric space and $\emptyset \neq A \subset X$, we write $\diam A := \sup\{d(x,y) : x,y\in A\}$ for the diameter of $A$. If $(x_t)_{t\in\Z_+}$ is a sequence, we write $x_{[0,t]} = (x_0,x_1,\ldots,x_t)$ for any $t\geq0$.%

Let $f:X \rightarrow X$ be a uniformly continuous map on a metric space $(X,d)$. We define the iterates of $f$ recursively by%
\begin{equation*}
  f^0 := \mathrm{id}_X,\quad f^{n+1} := f \circ f^n,\quad \forall n \in \N.%
\end{equation*}
If $f$ is invertible, we additionally put $f^{-n} := (f^{-1})^n$ for all $n\in\N$.%

For $n\in\N$ and $\ep>0$, a subset $F \subset X$ is said to $(n,\ep)$-span another subset $K \subset X$ provided that for each $x\in K$ there is $y\in F$ with $d(f^i(x),f^i(y)) \leq \ep$ for $0 \leq i < n$. Alternatively, we can describe this in terms of the metric%
\begin{equation*}
  d_{n,f}(x,y) := \max_{0\leq i < n}d(f^i(x),f^i(y)),%
\end{equation*}
which is topologically equivalent to $d$. Any $d_{n,f}$-ball of radius $\ep>0$ is called a \emph{Bowen-ball of order $n$ and radius $\ep$}. In these terms, a set $F$ $(n,\ep)$-spans $K$ if the closed Bowen-balls of order $n$ and radius $\ep$ centered at the points in $F$ form a cover of $K$. If $K$ is compact, we write $r_{\spn}(n,\ep;K)$ to denote the minimal cardinality of a set which $(n,\ep)$-spans $K$, observing that this number is finite. The \emph{topological entropy of $f$ on $K$} is then defined by%
\begin{equation*}
  h_{\tp}(f;K) := \lim_{\ep\downarrow0}\limsup_{n\rightarrow\infty}\frac{1}{n}\log r_{\spn}(n,\ep;K).%
\end{equation*}
By the monotonicity properties of $r_{\spn}(n,\ep;K)$, the limit for $\ep\downarrow0$ can be replaced by the supremum over $\ep>0$. An alternative definition can be given in terms of $(n,\ep)$-separated sets. A subset $E \subset X$ is called \emph{$(n,\ep)$-separated} if for any $x,y\in E$ with $x\neq y$ one has $d_{n,f}(x,y) > \ep$. Writing $r_{\sep}(n,\ep;K)$ for the maximal cardinality of an $(n,\ep)$-separated subset of a compact set $K$, one finds that $r_{\spn}(n,\ep;K) \leq r_{\sep}(n,\ep;K) \leq r_{\spn}(n,\ep/2;K)$, and hence%
\begin{equation*}
  h_{\tp}(f;K) = \lim_{\ep\downarrow0}\limsup_{n\rightarrow\infty}\frac{1}{n}\log r_{\sep}(n,\ep;K).%
\end{equation*}
If $X$ is compact, we also write $h_{\tp}(f) = h_{\tp}(f;X)$ and call this number the \emph{topological entropy of $f$}.%

Now assume that $X$ is compact and $\mu$ is an $f$-invariant Borel probability measure on $X$, i.e., $\mu \circ f^{-1} = \mu$. Let $\PC$ be a finite Borel partition of $X$ and put%
\begin{equation}\label{eq_partition_entropy}
  h_{\mu}(f;\PC) := \lim_{n\rightarrow\infty}\frac{1}{n}H_{\mu}(\PC^n),\quad \PC^n := \bigvee_{i=0}^{n-1}f^{-i}\PC,%
\end{equation}
where $H_{\mu}(\PC^n) = -\sum_{P\in\PC^n}\mu(P)\log \mu(P)$ denotes the entropy of the partition $\PC^n$ and $\vee$ is the usual join operation for partitions, i.e., $\bigvee_{i=0}^{n-1}f^{-i}\PC$ is the partition of $X$ whose members are the sets%
\begin{equation*}
  P_{i_0} \cap f^{-1}(P_{i_1}) \cap \ldots \cap f^{-n+1}(P_{i_{n-1}}),\quad P_{i_j} \in \PC.%
\end{equation*}

The existence of the limit in \eqref{eq_partition_entropy} follows from subadditivity. The metric entropy of $f$ w.r.t.~$\mu$ is defined by%
\begin{equation*}
  h_{\mu}(f) := \sup_{\PC}h_{\mu}(f;\PC),%
\end{equation*}
the supremum taken over all finite Borel partitions $\PC$. If $\mu$ is ergodic (w.r.t.~$f$), i.e.,%
\begin{equation*}
  f^{-1}(A) = A \quad\Rightarrow\quad \mu(A) \in \{0,1\},%
\end{equation*}
there is an alternative characterization of $h_{\mu}(f)$ in terms of $(n,\ep)$-spanning sets, due to Katok \cite{Kat}. For a fixed $\delta \in (0,1)$ put%
\begin{align*}
   r_{\spn}(n,\ep,\delta) := 
 \min \bigl\{ |F| : & F\ (n,\ep)\mbox{-spans a set of}\\
&\qquad \mbox{$\mu$-measure} \geq 1-\delta \bigr\}.%
\end{align*}
Then the metric entropy of $f$ satisfies%
\begin{equation*}
  h_{\mu}(f) = \lim_{\ep\downarrow0}\limsup_{n\rightarrow\infty}\frac{1}{n}\log r_{\spn}(n,\ep,\delta),%
\end{equation*}
and this limit is independent of $\delta$, see \cite[Thm.~1.1]{Kat}.%

Note that both $h_{\mu}(f)$ are $h_{\tp}(f)$ are nonnegative quantities, which can assume the value $+\infty$. They are related by the variational principle, i.e.,%
\begin{equation*}
  h_{\tp}(f) = \sup_{\mu}h_{\mu}(f),%
\end{equation*}
where the supremum is taken over all $f$-invariant Borel probability measures (see \cite{Mis} for a short proof).%

In the rest of the paper, we will assume throughout that the metric space $(X,d)$ is compact.%

\section{Eventual almost sure estimation}\label{sec_e1}%

For the estimation objective (E1) we can characterize $C_0$ as follows.%

\begin{theorem}\label{thm_e1_bounds}
Assume that $\pi_0$ is $f$-invariant and $h_{\tp}(f;\supp\pi_0) < \infty$. Then the smallest channel capacity above which (E1) can be achieved for every $\ep>0$ satisfies%
\begin{equation*}
  h_{\tp}(f;\supp\pi_0) \leq C_0 \leq \log(1+\lfloor 2^{h_{\tp}(f;\supp\pi_0)} \rfloor).%
\end{equation*}
\end{theorem}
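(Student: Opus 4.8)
The plan is to prove the two inequalities by a pair of mutually converse translations between channel codes and $(n,\ep)$-spanning sets of $Y := \supp\pi_0$. Throughout I would use that $\pi_0(Y) = 1$ (separability of $X$) and that $f^t(Y) = Y$ for every $t \in \Z$, which follows from $f$ being a homeomorphism and $\pi_0$ being $f$-invariant; finiteness of $h_{\tp}(f;Y)$ will be used only to guarantee that the alphabet built below is finite.

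\emph{Lower bound $C_0 \ge h_{\tp}(f;Y)$.} Fix $\ep > 0$ and suppose some coder/estimator pair with alphabet $\MC$ achieves (E1), with threshold time $T = T(\ep)$. For each $n \in \N$ I would partition $X$ according to the value of $(q_0,\dots,q_{T+n-1})$; this partition has at most $|\MC|^{T+n}$ cells, since each $q_i$ is a deterministic function of $x_0$, and on each cell the estimates $\hat x_T,\dots,\hat x_{T+n-1}$ are constant. Let $G$ be the full-measure set of initial conditions on which $d(f^t x_0, \hat x_t(x_0)) \le \ep$ holds simultaneously for $T \le t \le T+n-1$. Picking one point of $Y \cap G$ from each nonempty cell and pushing it forward by $f^T$, a two-term triangle inequality shows that the resulting finite set $(n,2\ep)$-spans $f^T(Y \cap G)$, which by $f$-invariance of $\pi_0$ and $Y$ is a dense, full-measure subset of $Y$; since the set is finite and closed Bowen-balls are closed, it in fact $(n,2\ep)$-spans all of $Y$. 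Hence $r_{\spn}(n,2\ep;Y) \le |\MC|^{T+n}$, and letting $n\to\infty$ and then $\ep\downarrow0$ gives $\log|\MC| \ge h_{\tp}(f;Y)$. As this holds for every admissible alphabet, $C_\ep \ge \limsup_n \frac{1}{n}\log r_{\spn}(n,2\ep;Y)$, and letting $\ep\downarrow 0$ yields $C_0 \ge h_{\tp}(f;Y)$.

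\emph{Upper bound $C_0 \le \log(1+\lfloor 2^{h_{\tp}(f;Y)}\rfloor)$.} Fix $\ep > 0$ and put $m := 1 + \lfloor 2^{h_{\tp}(f;Y)}\rfloor$, so that $\log m > h_{\tp}(f;Y)$. Because $\limsup_n \frac{1}{n}\log r_{\spn}(n,\ep/2;Y) \le h_{\tp}(f;Y)$, I can choose a block length $N$ with $r_{\spn}(N,\ep/2;Y) \le m^N$, fix an $(N,\ep/2)$-spanning set $F$ of $Y$, and fix an injection of $F$ into $\MC^N$ with $|\MC| = m$. On the block $[kN,(k+1)N)$ the coder exploits determinism: already at time $kN$ it knows the entire future orbit, so it computes $x_{(k+1)N} = f^N(x_{kN})$, selects $y_{k+1} \in F$ with $d_{N,f}(x_{(k+1)N},y_{k+1}) \le \ep/2$, and spends the $N$ channel uses of block $k$ transmitting the codeword of $y_{k+1}$. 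The estimator then knows $y_k$ by the beginning of block $k$ and outputs $\hat x_t := f^{t-kN}(y_k)$ on $[kN,(k+1)N)$, which gives $d(x_t,\hat x_t) \le \ep/2 \le \ep$ for all $t \ge N$. Thus (E1) is achieved with this alphabet for every $\ep > 0$, so $C_0 \le \log m = \log(1+\lfloor 2^{h_{\tp}(f;Y)}\rfloor)$.

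The only genuinely delicate point is the causality constraint in the upper bound: a priori the coder cannot tell the estimator which Bowen-ball the current block lies in before the block is over. The key observation that unlocks the argument is that the system is deterministic, so the coder may look one block ahead and pre-transmit; after that the scheme is plain block coding at rate $\log m$. On the lower-bound side the two minor technicalities are upgrading ``$(n,2\ep)$-spans a full-measure subset of $Y$'' to ``$(n,2\ep)$-spans $Y$'' (density together with compactness of $Y$) and carrying the $\ep$-dependent offset $T(\ep)$ through the two limits, both routine since $T$ does not affect the exponential growth rate in $n$.
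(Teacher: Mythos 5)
Your argument is correct and pursues the same two-sided strategy as the paper: a counting argument showing that any working code must carry at least $h_{\tp}(f;Y)$ bits per step, and the look-ahead block-coding scheme of Matveev--Pogromsky for the achievability direction. The upper bound is essentially identical (and in fact your inequality $r_{\spn}(N,\ep/2;Y)\le m^N$ reads correctly; the paper's displayed inequality \eqref{eq_kchoice} has the sign reversed, which must be a typo since the subsequent sentence uses it in the opposite direction). One small thing worth making explicit in the upper bound: the selection of $y_{k+1}\in F$ requires $x_{(k+1)N}\in Y$, which holds whenever $x_0\in Y$ since $Y=\supp\pi_0$ is $f$-invariant, and $\pi_0(Y)=1$ covers the (E1) requirement; the paper states this caveat explicitly.

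On the lower bound your implementation differs from the paper's in a pleasant way. The paper works with $(n,2\ep)$-\emph{separated} sets inside $\supp\pi_0$: it takes a maximal such set, perturbs it so it lies in the full-measure set $\tilde X$ while keeping cardinality and separation, shows the map to estimate-strings is injective, and uses maximality to identify the cardinality with $r_{\sep}(n,2\ep;\supp\pi_0)$. You instead work directly with $(n,2\ep)$-\emph{spanning} sets: you partition $X$ by the observed symbol string $q_{[0,T+n-1]}$, choose one representative from $Y\cap G$ per cell, and show via the triangle inequality that the $f^T$-pushforwards $(n,2\ep)$-span the dense full-measure set $f^T(Y\cap G)$, then upgrade to spanning all of $Y$ using that a finite union of closed Bowen balls is closed. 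The two routes are dual (spanning vs.\ separated) and give the same exponential rate; yours avoids the perturbation-of-a-maximal-set step, replacing it with the density-and-closedness observation, which is a slightly cleaner way to pass from a full-measure set to all of $Y$. Both correctly treat the additive offset $T(\ep)$ as harmless in the $n\to\infty$ limit.
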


%The lower bound was proved in \cite[Thm.~3.3]{KYu}. 

\begin{proof}
To prove the lower bound, assume that for some $\ep>0$ the objective is achieved under some coding and estimation policies over a channel of capacity $C = \log|\MC|$. That is, there exists $T = T(\ep)$ such that the set%
\begin{equation*}
  \tilde{X} := \{ x_0 \in X : d(f^t(x_0),\hat{x}_t(x_0)) \leq \ep,\ \forall t \geq T \}%
\end{equation*}
has measure one. For a fixed $n\in\N$, let $E_n \subset \supp\pi_0$ be a maximal set with the property that for all $x,y \in E_n$ with $x \neq y$ we have $d(f^t(x),f^t(y)) > 2\ep$ for some $t \in \{T,T+1,\ldots,T+n-1\}$. Since $\tilde{X}$ is dense in $\supp\pi_0$, a slight perturbation of $E_n$ leads to a subset of $\tilde{X}$ with the same property and the same cardinality. Hence, we may assume that $E_n \subset \tilde{X}$. Furthermore, consider the set%
\begin{equation*}
  \EC_{T+n} := \left\{ (\hat{x}_0,\hat{x}_1,\ldots,\hat{x}_{T+n-1}) : x_0 \in X \right\}%
\end{equation*}
of all possible $(T+n)$-strings the estimator can generate in the first time interval of length $T+n$. Define a map $\zeta:E_n \rightarrow \EC_{T+n}$ by assigning to each $x_0 \in E$ the corresponding $(T+n)$-string of estimates that is generated when the system starts in $x_0$. Assuming $\zeta(x) = \zeta(y)$, we find that for all $T \leq t \leq T + n - 1$,%
\begin{equation*}
  d(f^t(x),f^t(y)) \leq d(f^t(x),\hat{x}_t) + d(\hat{x}_t,f^t(y)) \leq \ep + \ep = 2\ep,%
\end{equation*}
implying that $x = y$. Hence, $\zeta$ is injective and thus, $|E_n| \leq |\EC_{T+n}| \leq |\MC|^{T+n}$. Now observe that the set $f^T(E) \subset \supp\pi_0$ is a maximal $(n,2\ep)$-separated set. This implies%
\begin{equation*}
  C = \log|\MC| \geq \frac{1}{T+n} \log r_{\sep}(n,2\ep;\supp\pi_0).%
\end{equation*}
Since $n$ and $\ep$ were chosen arbitrarily, letting $n$ tend to infinity and $\ep$ to zero, the inequality $C \geq h_{\tp}(f;\supp\pi_0)$ follows.%

The upper bound follows by applying the coding and estimation scheme described in \cite[Thm.~8]{MPo} for the system $(\supp\pi_0,f_{|\supp\pi_0})$, using that $\supp\pi_0$ is a compact $f$-invariant set. This scheme works as follows. Assume that the channel alphabet $\MC$ satisfies%
\begin{equation*}
  |\MC| \geq 1 + \lfloor 2^{h_{\tp}(f;\supp\pi_0)} \rfloor,%
\end{equation*}
implying $C > h_{\tp}(f;\supp\pi_0)$. Let $\ep>0$ and fix $k\in\N$ such that%
\begin{equation}\label{eq_kchoice}
  \frac{1}{k}\log r_{\spn}(k,\ep;\supp\pi_0) > C,%
\end{equation}
which is possible by the definition of $h_{\tp}$. Let $S$ be a set which $(k,\ep)$-spans $\supp\pi_0$ with $|S| = r_{\spn}(k,\ep;\supp\pi_0)$. The coder at time $t_j := jk$ computes $f^k(x_{jk})$ and chooses an element $y\in S$ such that $d(f^t(f^k(x_{jk})),f^t(y)) \leq \ep$ for $0 \leq t < k$ provided that $f^k(x_{jk}) \in \supp\pi_0$. By \eqref{eq_kchoice}, $y$ can be encoded and sent over the channel in the forthcoming time interval of length $k$. In the case when $f^k(x_{jk}) \notin \supp\pi_0$, it is not important what the coder sends. In the first case, at time $t_{j+1}$, the estimator has $y$ available and can use $f^t(y)$ as an estimate for $x_{(j+1)k+t}$. In this way, the estimation objective is achieved for all $x_0 \in \supp\pi_0$ and $t \geq k$. Hence,%
\begin{align*}
  P(d(x_t,\hat{x}_t) \leq \ep) &= \pi_0\left(\left\{ x_0 \in X : d(f^t(x_0),\hat{x}_t(x_0)) \leq \ep \right\}\right)\\
	&= \pi_0(\supp\pi_0) = 1%
\end{align*}
for all $t\geq k$, which completes the proof.%
\end{proof}

The value $+\infty$ for $h_{\tp}(f;\supp\pi_0)$, which is excluded in the above theorem, refers to the case, when the estimation objective cannot be achieved via a finite-capacity channel, which becomes clear from the first part of the proof.%

\section{Asymptotic almost sure estimation}\label{sec_e2}%

In this section, we study the asymptotic estimation objective (E2). As it turns out, the associated $C_0$ is always bounded below by the metric entropy $h_{\pi_0}(f)$ when $\pi_0$ is $f$-invariant. Since (E1) implies (E2), the topological entropy on $\supp\pi_0$ still provides an upper bound on $C_0$. Under an additional condition for the measure-theoretic dynamical system $(X,f,\pi_0)$, we are able to improve the upper bound by putting $h_{\pi_0}(f)$ in the place of $h_{\tp}(f;\supp\pi_0)$.%

\begin{theorem}\label{thm_e2_ub1}
Assume that $\pi_0$ is $f$-invariant. Then the smallest channel capacity above which (E2) can be achieved for every $\ep>0$ satisfies%
\begin{equation*}
  C_0 \leq \log(1 + \lfloor 2^{h_{\tp}(f;\supp\pi_0)} \rfloor).%
\end{equation*}
\end{theorem}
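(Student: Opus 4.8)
The plan is to observe that Theorem~\ref{thm_e2_ub1} is an immediate consequence of Theorem~\ref{thm_e1_bounds} together with the implication \eqref{eq_E1} $\Rightarrow$ \eqref{eq_E2}. Concretely, fix any $\ep>0$ and any channel with capacity $C > \log(1+\lfloor 2^{h_{\tp}(f;\supp\pi_0)}\rfloor)$. If $h_{\tp}(f;\supp\pi_0)<\infty$, the upper-bound construction in the proof of Theorem~\ref{thm_e1_bounds} (the block-coding scheme borrowed from \cite[Thm.~8]{MPo}, applied to the compact $f$-invariant system $(\supp\pi_0, f_{|\supp\pi_0})$) already achieves the stronger objective (E1) for this $\ep$: there is a $k\in\N$ such that $d(x_t,\hat x_t)\le\ep$ for all $x_0\in\supp\pi_0$ and all $t\ge k$, hence $\pi_0(\{x_0 : d(f^t(x_0),\hat x_t(x_0))\le\ep\})=\pi_0(\supp\pi_0)=1$ for all $t\ge k$. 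Since (E1) for a given $\ep$ trivially implies (E2) for the same $\ep$ — the set in \eqref{eq_E1} is contained in the set in \eqref{eq_E2} once $t\ge T$ — the objective (E2) is met. As $\ep>0$ was arbitrary, $C_0\le\log(1+\lfloor 2^{h_{\tp}(f;\supp\pi_0)}\rfloor)$.

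The only point needing a separate remark is the degenerate case $h_{\tp}(f;\supp\pi_0)=\infty$, which Theorem~\ref{thm_e1_bounds} explicitly excluded. In that case the claimed bound reads $C_0\le\log(1+\lfloor\infty\rfloor)=\infty$ under the conventions fixed in Section~\ref{sec_preliminaries} ($\log\infty:=\infty$), so the inequality holds vacuously and nothing has to be proved. Thus the statement is valid without any finiteness hypothesis on the topological entropy, and the proof in the finite case is exactly the reduction described above.

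I do not expect any genuine obstacle here: the theorem is essentially a corollary, and the work was already done in establishing the upper bound in Theorem~\ref{thm_e1_bounds}. The only thing to be careful about is to state the reduction at the right level of quantifiers — one must check that the same block length $k$ (depending on $\ep$) that works for (E1) is what one feeds into the definition of (E2), and that the ``for every $\ep>0$'' in the statement of $C_0$ is handled by letting $\ep\downarrow 0$ after the construction, not before. A clean way to present the proof is therefore: recall that (E1)$\Rightarrow$(E2) was noted right after \eqref{eq_E3}; invoke the upper-bound half of the proof of Theorem~\ref{thm_e1_bounds} verbatim to get (E1), hence (E2), for each $\ep>0$ whenever $C>\log(1+\lfloor 2^{h_{\tp}(f;\supp\pi_0)}\rfloor)$; and dispose of the case $h_{\tp}(f;\supp\pi_0)=\infty$ by the convention $\log\infty=\infty$.
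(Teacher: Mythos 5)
Your reduction is exactly the paper's proof: Theorem~\ref{thm_e2_ub1} is obtained by invoking the upper-bound half of Theorem~\ref{thm_e1_bounds} and noting that (E1) implies (E2). Your additional remark on the degenerate case $h_{\tp}(f;\supp\pi_0)=\infty$ is a small but correct extra point that the paper leaves implicit.
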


\begin{proof}
This follows immediately from Theorem \ref{thm_e1_bounds}, since (E1) is stronger than (E2).%
\end{proof}

\begin{theorem}\label{thm_e2_ub2}
Assume that $\pi_0$ is an ergodic measure for $f$. Furthermore, assume that there exists a finite Borel partition $\PC$ of $X$ of arbitrarily small diameter such that $h_{\pi_0}(f) = h := h_{\pi_0}(f;\PC)$ and%
\begin{equation}\label{eq_entropy_speed}
  \limsup_{n\rightarrow\infty}\frac{1}{n}\log\pi_0\bigl(\bigl\{ x : \bigl|-\frac{1}{n}\log\pi_0(\PC^n(x)) - h\bigr| > \delta \bigr\}\bigr) < 0%
\end{equation}
for all sufficiently small $\delta>0$, where $\PC^n(x)$ denotes the unique element of $\PC^n$ containing $x$. Then the smallest channel capacity $C_0$ above which (E2) can be achieved for every $\ep>0$ satisfies%
\begin{equation*}
  C_0 \leq \log(1 + \lfloor 2^{h_{\pi_0}(f)} \rfloor).%
\end{equation*}
\end{theorem}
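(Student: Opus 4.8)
The plan is to construct, for an arbitrary $\ep>0$ and an arbitrary channel alphabet $\MC$ with $|\MC|\geq 1+\lfloor 2^h\rfloor$ (equivalently $C:=\log|\MC|>h$, since $\lfloor x\rfloor+1>x$ for every $x\geq0$), a causal coding and estimation scheme achieving \eqref{eq_E2} for this $\ep$; this gives $C_{\ep}\leq\log(1+\lfloor 2^h\rfloor)$ for every $\ep>0$ and hence the asserted bound on $C_0=\lim_{\ep\downarrow0}C_{\ep}$. Fix $\ep>0$ and, using the hypothesis, pick a finite Borel partition $\PC$ with $\diam\PC:=\max_{P\in\PC}\diam P\leq\ep$, with $h_{\pi_0}(f;\PC)=h=h_{\pi_0}(f)$, and satisfying \eqref{eq_entropy_speed}. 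Choose $\delta>0$ small enough that \eqref{eq_entropy_speed} applies to this $\delta$ and $\delta<C-h$; then there are $c>0$ and $m_0\in\N$ with $\pi_0(B_n)\leq 2^{-cn}$ for all $n\geq m_0$, where $B_n:=\{x\in X:\ |-\frac1n\log\pi_0(\PC^n(x))-h|>\delta\}$. Call an atom $P\in\PC^n$ \emph{good} if $2^{-n(h+\delta)}\leq\pi_0(P)\leq 2^{-n(h-\delta)}$; the good atoms are precisely the atoms contained in $X\setminus B_n$, they are pairwise disjoint of $\pi_0$-measure $\geq 2^{-n(h+\delta)}$, so there are at most $2^{n(h+\delta)}$ of them, and their union has measure $\geq 1-2^{-cn}$ for $n\geq m_0$.

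Now fix a rate $\rho$ with $1<\rho<C/(h+\delta)$ — this choice is the crux, discussed below — set $n_k:=\lceil\rho^k\rceil$ and $N_k:=\sum_{i=0}^k n_i$ with $N_{-1}:=0$, and partition the time axis into consecutive blocks $I_k:=\{N_{k-1},\dots,N_k-1\}$ of length $n_k$. Since $f$ is a fixed homeomorphism, from its knowledge of $x_0$ (hence of the whole orbit) the coder can, during block $I_k$, transmit over its $n_k$ channel uses either an index identifying the atom $P_k:=\PC^{n_{k+1}}(x_{N_k})\in\PC^{n_{k+1}}$ among the $\leq 2^{n_{k+1}(h+\delta)}$ good atoms of $\PC^{n_{k+1}}$ (if $P_k$ is good), or a reserved flag (if $P_k\subset B_{n_{k+1}}$). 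This is feasible once $2^{n_kC}\geq 1+2^{n_{k+1}(h+\delta)}$, which — because $n_{k+1}\leq\rho n_k+1$ and $\rho(h+\delta)<C$ — holds for all $k\geq k_0$ with $k_0$ large enough; for $k<k_0$ the coder sends anything. On the receiving side, during block $I_{k+1}$ the estimator, if it received a valid index for a good atom $P_k$, picks a fixed representative $y_k\in P_k$ (chosen in advance, one per atom) and outputs $\hat x_{N_k+s}:=f^s(y_k)$ for $0\leq s<n_{k+1}$, and otherwise outputs an arbitrary point. Whenever $P_k$ is good, $x_{N_k}$ and $y_k$ lie in the same atom of $\PC^{n_{k+1}}$, so $f^s(x_{N_k})$ and $f^s(y_k)$ lie in the same atom of $\PC$ for $0\leq s<n_{k+1}$, whence
\begin{equation*}
  d(x_{N_k+s},\hat x_{N_k+s})=d\bigl(f^s(x_{N_k}),f^s(y_k)\bigr)\leq\diam\PC\leq\ep\qquad(0\leq s<n_{k+1}).
\end{equation*}

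For the almost sure convergence, call block $I_{k+1}$ \emph{defective} if $k<k_0$ or $P_k\subset B_{n_{k+1}}$; on a non-defective block the displayed estimate applies for all its time instants. For $k\geq k_0$ the defective event is $\{x_0:x_{N_k}\in B_{n_{k+1}}\}=f^{-N_k}(B_{n_{k+1}})$, which by $f$-invariance of $\pi_0$ has measure $\pi_0(B_{n_{k+1}})\leq 2^{-c\,n_{k+1}}\leq 2^{-c\rho^{k+1}}$ as soon as $n_{k+1}\geq m_0$. Since $\rho>1$ we have $\sum_k 2^{-c\rho^{k+1}}<\infty$, so $\sum_k\pi_0(\{x_0:I_{k+1}\text{ defective}\})<\infty$ (the remaining finitely many terms being $\leq1$), and the Borel--Cantelli lemma provides, for $\pi_0$-a.e.\ $x_0$, an index $K(x_0)$ such that $I_{k+1}$ is non-defective for all $k\geq K(x_0)$; for such $x_0$, $d(f^t(x_0),\hat x_t(x_0))\leq\ep$ for every $t\geq N_{K(x_0)}$, hence $\limsup_{t\to\infty}d(f^t(x_0),\hat x_t(x_0))\leq\ep$. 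This establishes \eqref{eq_E2} and finishes the argument.

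The step I expect to be the main obstacle is the joint choice of the block-growth rate $\rho$. On the one hand $\rho>1$ is forced: the block lengths must tend to infinity so that the defective-block probabilities $2^{-cn_{k+1}}$ are summable — with bounded block lengths, infinitely many blocks would each fail with a fixed positive probability, destroying any $\limsup$-type guarantee (this is also why the stronger objective (E1) is not obtained here). On the other hand $\rho<C/(h+\delta)$ is needed so that the unavoidable one-block transmission delay still leaves enough channel capacity ($n_kC$ bits during $I_k$) to index the $\leq 2^{n_{k+1}(h+\delta)}$ typical atoms of $\PC^{n_{k+1}}$. That a single geometric schedule simultaneously satisfies both constraints hinges precisely on the \emph{exponential} rate in \eqref{eq_entropy_speed}: with only a qualitative Shannon--McMillan--Breiman statement ($\pi_0(B_n)\to0$, no rate) one could not guarantee summability of $\sum_k\pi_0(B_{n_{k+1}})$ along a geometrically — hence rate-compatibly — growing sequence $n_k$. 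The remaining ingredients (the one-block delay, the finite start-up phase $k<k_0$, the reserved flag, and measurable selection of the $y_k$) are routine.
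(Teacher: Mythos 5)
Your proof is correct, and it follows the same blueprint as the paper's: code only the ``typical'' (good) length-$n$ cylinders of $\PC^n$, transmit itinerary information for the \emph{next} block during the current one (incurring a one-block delay), and use the exponential decay in \eqref{eq_entropy_speed} to make the per-block failure probabilities summable so Borel--Cantelli delivers the almost-sure $\limsup$ bound. The difference is purely in the block schedule. The paper takes \emph{arithmetically} growing block lengths $1,2,3,\dots$ (sampling times $\tau_{j+1}=\tau_j+j+1$), transmits the length-$(j{+}1)$ itinerary starting at $\tau_{j+1}$ over the length-$(j{+}1)$ block $[\tau_j,\tau_{j+1})$, and covers the one surplus time step of the next (length-$(j{+}2)$) block by setting $\hat x_{\tau_{j+2}-1}:=f(\hat x_{\tau_{j+2}-2})$, controlled via uniform continuity of $f$ (the choice of $\rho\in(0,\ep)$ with $d(x,y)<\rho\Rightarrow d(f(x),f(y))<\ep$ and atoms of diameter $<\rho$). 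Because block and itinerary lengths match exactly, the paper needs only $C\ge h+\delta$ — no start-up phase and no flag symbol. Your \emph{geometrically} growing schedule $n_k=\lceil\rho^k\rceil$ avoids the $f$-iteration trick (and hence the uniform-continuity step and the separate $\rho$-vs-$\ep$ bookkeeping) at the price of the extra constraint $\rho<C/(h+\delta)$, a finite start-up phase $k<k_0$, and a reserved flag symbol. Both are fine; the paper's scheduling is a bit leaner. One small correction to your closing discussion: geometric growth is not essential — the paper's linear schedule works precisely because the transmitted itinerary length equals the transmission block length, with the unit mismatch absorbed by the dynamics. What is essential (as the paper also remarks) is only the summability $\sum_n\pi_0(B_n)<\infty$ along whatever schedule one uses; the exponential rate in \eqref{eq_entropy_speed} guarantees this with room to spare for both schedules.
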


\begin{proof}
The proof is subdivided into two steps.%

\emph{Step 1.} Without loss of generality, we may assume that $h_{\pi_0}(f) < \infty$, since otherwise the statement trivially holds. Consider a channel with input alphabet $\MC$ satisfying%
\begin{equation*}
  |\MC| \geq 1 + \lfloor 2^{h_{\pi_0}(f)} \rfloor,%
\end{equation*}
which implies $C > h_{\pi_0}(f)$, and fix $\ep>0$. Choose a number $\delta>0$ satisfying%
\begin{equation}\label{eq_delta_choice_e2}
  C \geq h_{\pi_0}(f) + \delta.%
\end{equation}
Using uniform continuity of $f$ on the compact space $X$, we find $\rho \in (0,\ep)$ so that%
\begin{equation}\label{eq_rho_choice_e2}
  d(x,y) < \rho \quad\Rightarrow\quad d(f(x),f(y)) < \ep.%
\end{equation}
Let $\PC$ be a finite Borel partition of $X$ satisfying the assumption \eqref{eq_entropy_speed} whose elements have diameter smaller than $\rho$ and put%
\begin{equation*}
  h := h_{\pi_0}(f;\PC) = \lim_{n\rightarrow\infty}\frac{1}{n}H_{\pi_0}(\PC^n).%
\end{equation*}
Recall that by ergodicity of $\pi_0$, the Shannon-McMillan-Breiman Theorem states that%
\begin{equation*}
  -\frac{1}{n}\log\pi_0(\PC^n(x)) \rightarrow h \mbox{\quad for a.e.\ } x \in X,%
\end{equation*}
implying that%
\begin{equation*}
  \pi_0\bigl( \bigl\{ x : \bigl|-\frac{1}{n}\log\pi_0(\PC^n(x)) - h\bigr| > \delta \bigr\}\bigr) \rightarrow 0%
\end{equation*}
as $n\rightarrow\infty$. The assumption \eqref{eq_entropy_speed} tells us that this convergence is exponential.%

Let $\PC = \{P_1,\ldots,P_r\}$ and write $i^n(x)$ for the length-$n$ itinerary of $x$ w.r.t.~$\PC$, i.e., $i^n(x) \in \{1,\ldots,r\}^n$ with $f^j(x) \in P_{i^n(x)_j}$ for $0 \leq j < n$. Consider the sets%
\begin{align*}
 \Sigma_{n,\delta} = \{ a \in \{1,\ldots,r\}^n : 2^{-n(h+\delta)} &\leq \pi_0(\{x:i^n(x)=a\})\\
& \leq 2^{-n(h-\delta)} \}.%
\end{align*}
We define sampling times by%
\begin{equation*}
  \tau_0 := 0,\quad \tau_{j+1} := \tau_j + j + 1,\quad j \geq 0%
\end{equation*}
and put%
\begin{equation*}
  \zeta_j := \pi_0\left(\left\{x\in X : i^j(f^{\tau_j}(x)) \notin \Sigma_{j,\delta}\right\}\right)%
\end{equation*}
for all $j\in\Z_+$. We obtain%
\begin{align*}
  1-\zeta_j &= \pi_0\bigl(\bigl\{x\in X : i^j(f^{\tau_j}(x)) \in \Sigma_{j,\delta}\bigr\}\bigr)\\
	          &= \pi_0\bigl(\bigl\{x\in X : 2^{-j(h+\delta)} \leq 
						   \pi_0(\{y:i^j(y)=i^j(f^{\tau_j}(x))\})\\
					 &\qquad \qquad \qquad \leq 2^{-j(h-\delta)} \bigr\}\bigr)\\
						&= \pi_0\bigl(\bigl\{x\in X : \bigl|-\frac{1}{j}\log \pi_0(\PC^j(f^{\tau_j}(x))) - h\bigr| \leq \delta \bigr\}\bigr)\\
						&= \pi_0\bigl(\bigl\{ x\in X : \bigl|-\frac{1}{j}\log \pi_0(\PC^j(x)) - h\bigr| \leq \delta \bigr\}\bigr),%
\end{align*}
where we used $f$-invariance of $\pi_0$ in the last equality. Then \eqref{eq_entropy_speed} implies the existence of a constant $\alpha>0$ so that for all sufficiently large $j$ we have%
\begin{align*}
  \zeta_j = \pi_0\bigl(\bigl\{x\in X : \bigl|-\frac{1}{j}\log \pi_0(\PC^j(x)) - h\bigr| > \delta \bigr\}\bigr) \leq \rme^{-j\alpha}.%
\end{align*}
Hence, for some $j_0 \geq 0$,%
\begin{equation*}
  \sum_{j=0}^{\infty}\zeta_j \leq \sum_{j=0}^{j_0}\zeta_j + \sum_{j=j_0+1}^{\infty}\rme^{-j\alpha} < \infty.% 
\end{equation*}
Consequently, the Borel-Cantelli lemma implies%
\begin{equation}\label{eq_bc_e2}
  \pi_0(\{x \in X : \exists j_0 \mbox{ with } i^j(f^{\tau_j}(x))) \in \Sigma_{j,\delta},\ \forall j \geq j_0\}) = 1.%
\end{equation}

\emph{Step 3.} Using Corollary \ref{cor_smb} in the appendix, $h = h_{\pi_0}(f)$ and \eqref{eq_delta_choice_e2}, we obtain%
\begin{equation}\label{eq_typicalset_card_e2}
  |\Sigma_{j,\delta}| \leq 2^{j(h + \delta)} \leq 2^{jC} = |\MC|^j.%
\end{equation}
Hence, $j$ channel uses are sufficient to transmit an encoded element of $\Sigma_{j,\delta}$. Now we specify the coding and estimation policies. In the time interval from $\tau_j$ to $\tau_{j+1}-1$, which has length $\tau_{j+1} - \tau_j = j+1$, the coder encodes the information regarding the orbit in the time interval from $\tau_{j+1}$ to $\tau_{j+2}-2$, i.e., the itinerary%
\begin{equation*}
  i^{\tau_{j+2} - \tau_{j+1} - 1}(f^{\tau_{j+1}}(x)) = i^{j+1}(f^{\tau_{j+1}}(x)).% 
\end{equation*}
For all $x$ with $i^{j+1}(f^{\tau_{j+1}}(x)) \in \Sigma_{j+1,\delta}$ this information can be sent through the channel in the time interval of length $j+1$ by \eqref{eq_typicalset_card_e2}. At time $\tau_{j+1} + k$, $0 \leq k \leq j$, the estimator output $\hat{x}_{\tau_{j+1}+k}$ is an arbitrary element of the partition set $P_{s_k}$ with $s_k$ being the symbol at position $k$ in the transmitted string $i^{j+1}(f^{\tau_{j+1}}(x))$. At time $\tau_{j+1} + j+1 = \tau_{j+2} - 1$, the estimator output is $\hat{x}_{\tau_{j+2}-1} := f(\hat{x}_{\tau_{j+2}-2})$. Provided that $i^{j+1}(f^{\tau_{j+1}}(x)) \in \Sigma_{j+1,\delta}$, the estimation accuracy of $\ep$ will be achieved, because%
\begin{equation*}
  d(\hat{x}_{\tau_{j+1}+k},f^{\tau_{j+1}+k}(x)) \leq \diam P_{s_k} < \rho < \ep,\quad \forall 0 \leq k \leq j%
\end{equation*}
by the choice of the partition $\PC$ and%
\begin{equation*}
  d(\hat{x}_{\tau_{j+2}-1},f^{\tau_{j+2}-1}(x)) = d(f(\hat{x}_{\tau_{j+2}-2}),f(f^{\tau_{j+2}-2}(x))) < \ep%
\end{equation*}
by \eqref{eq_rho_choice_e2}. According to \eqref{eq_bc_e2}, for almost every $x$ we have $i^j(f^{\tau_j}(x)) \in \Sigma_{j,\delta}$ for all sufficiently large $j$ so that the estimation accuracy $\ep$ will eventually be achieved for those $x$, implying%
\begin{equation*}
  \pi_0(\{x\in X : \limsup_{t\rightarrow\infty} d(f^t(x),\hat{x}_t(x)) \leq \ep \}) = 1,%
\end{equation*}
which completes the proof.%
\end{proof}

\begin{remark}
The exponential convergence \eqref{eq_entropy_speed} is certainly a strong assumption and we do not expect that it is satisfied by a great variety of dynamical systems. The standard example, where it can be verified, is a shift over a finite alphabet equipped with a Gibbs measure, cf.~\cite{VZh}. Since such shifts are often embedded in chaotic attractors of smooth systems, we also find examples in this class of systems. A concrete example will be given in Section \ref{sec_corex}. We also note that the (geometric/exponential convergence) condition (\ref{eq_entropy_speed}) can be relaxed; what is essential is the summability of the probability sequence $\sum_j \pi_0\bigl(\bigl\{x\in X : \bigl|-\frac{1}{j}\log \pi_0(\PC^j(x)) - h\bigr| > \delta \bigr\}\bigr)$ for every $\delta > 0$ so that the Borel-Cantelli lemma can be invoked. In particular, instead of a geometric convergence rate, a sufficiently fast subgeometric convergence would be sufficient.%
\end{remark}

To provide a lower bound on $C_0$, we will use the following lemma which can be found in \cite[Prop.~1]{New,Ne2}. For completeness, we provide its proof in the appendix.%

\begin{lemma}\label{lem_newhouse}
Assume that $\pi_0$ is $f$-invariant. Given $\tilde{X} \subset X$ and $\delta>0$, let%
\begin{equation*}
  h(f;\tilde{X},\delta) := \limsup_{n\rightarrow\infty}\frac{1}{n}\log r_{\sep}(n,\delta;\tilde{X}).%
\end{equation*}
Then, for any partition $\PC$ of $X$ of the form $\PC = \{P_1,\ldots,P_s,X\backslash \bigcup P_i\}$ with compact sets $P_1,\ldots,P_s$ ($s\in\N$) and for any $\rho>0$, we can choose $\delta>0$, $\alpha \in (0,1)$ and $N = N(\rho) \in \N$ so that for any measurable set $\tilde{X} \subset X$ with $\pi_0(\tilde{X}) \geq 1 - \alpha$ we have%
\begin{equation*}
  \frac{1}{N}h_{\pi_0}(f^N;\PC) \leq h(f;\tilde{X},\delta) + \rho.%
\end{equation*}
\end{lemma}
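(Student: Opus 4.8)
The plan is to bound the $N$-step metric entropy $h_{\pi_0}(f^N;\PC)$ by the separated-set growth rate $h(f;\tilde X,\delta)$ plus a controlled error, exploiting that $\PC$ is a partition into compact sets together with a single "junk" set, so that the boundaries of its elements are well-separated. The standard mechanism (Newhouse's argument) is: if $\tilde X$ has large measure, then most cylinders $\PC^{Nk}$ have representatives in $\tilde X$, so counting such cylinders reduces to counting their $f^N$-images, which — because the $P_i$ are pairwise disjoint compact sets — can be done by an $(k,\delta)$-separated set of $f^N$ (equivalently, an $(Nk,\delta)$-separated set of $f$) once $\delta$ is chosen smaller than the mutual distances between the $P_i$'s after the uniform-continuity modulus of $f^N$ is taken into account.

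First I would choose the parameters. Since $P_1,\dots,P_s$ are compact and pairwise disjoint, set $\delta_0 := \min_{i\neq j}\dist(P_i,P_j)>0$; then pick $N=N(\rho)$ large enough that $\tfrac{1}{N}H_{\pi_0}(\PC)<\rho/2$ — this is possible because $H_{\pi_0}(\PC)$ is a fixed finite number — and let $\delta \in (0,\delta_0)$ be chosen so that any two points with $d_{N,f}(x,y)\le\delta$ lie in the same element of $\PC$ on each of the coordinates $0,\dots,N-1$ whenever those coordinates are among $P_1,\dots,P_s$. Finally pick $\alpha\in(0,1)$ small; its role is to guarantee that a set of measure $\ge 1-\alpha$ meets "enough" cylinders of $(\PC^N)^k = \PC^{Nk}$, i.e. that the cylinders it misses carry total measure at most, say, $1/2$ for all large $k$, which can be arranged by a Shannon-McMillan-Breiman / counting argument since the number of $\mu$-non-negligible cylinders grows like $2^{Nk\,h_{\pi_0}(f^N;\PC)/N}$.

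Next comes the core estimate. For a measurable $\tilde X$ with $\pi_0(\tilde X)\ge 1-\alpha$ and $k$ large, let $\CC_k$ be the collection of elements of $\PC^{Nk}=\bigvee_{i=0}^{Nk-1}f^{-i}\PC$ that intersect $\tilde X$. By the choice of $\alpha$ and subadditivity one shows $\log|\CC_k|\ge k\, h_{\pi_0}(f^N;\PC) - o(k)$ along a subsequence, using $H_{\pi_0}((\PC^N)^k)\le \log|\CC_k| + H_{\pi_0}(\PC)\cdot(\text{correction})$ together with the fact that the missed cylinders contribute a controllably small entropy term (bounded via $-t\log t$ concavity by $\alpha$-dependent quantities). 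Pick one point $x_C\in C\cap\tilde X$ from each $C\in\CC_k$. I claim these points form an $(Nk,\delta)$-separated subset of $\tilde X$: if $x_C$ and $x_{C'}$ with $C\ne C'$ satisfied $d_{Nk,f}(x_C,x_{C'})\le\delta$, then in particular $d(f^i x_C, f^i x_{C'})\le\delta<\delta_0$ for all $0\le i<Nk$, and since $\delta<\delta_0$ forces $f^i x_C$ and $f^i x_{C'}$ into the same $\PC$-element for every $i$ (the junk set poses no problem — both points are either in it or in a common $P_j$), we get $C=C'$, a contradiction. Hence $r_{\sep}(Nk,\delta;\tilde X)\ge|\CC_k|$, and therefore
\begin{equation*}
  h(f;\tilde X,\delta) = \limsup_{m\to\infty}\frac1m\log r_{\sep}(m,\delta;\tilde X) \ge \limsup_{k\to\infty}\frac{1}{Nk}\log|\CC_k| \ge \frac1N h_{\pi_0}(f^N;\PC) - \frac\rho2,
\end{equation*}
which rearranges to the desired inequality $\tfrac1N h_{\pi_0}(f^N;\PC)\le h(f;\tilde X,\delta)+\rho$ after absorbing the $o(k)$ term and the $H_{\pi_0}(\PC)/N$ correction into $\rho/2$.

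The main obstacle I anticipate is the bookkeeping in the step "$\log|\CC_k|\ge k\,h_{\pi_0}(f^N;\PC)-o(k)$": one must show that discarding the $\PC^{Nk}$-cells disjoint from $\tilde X$ costs only a small fraction of the entropy, uniformly over all $\tilde X$ with $\pi_0(\tilde X)\ge 1-\alpha$. This is where $\alpha$ must be calibrated — essentially one needs that for large $k$ the $\mu$-measure of any union of $\PC^{Nk}$-cells of total measure $\le\alpha$ is concentrated on at most $2^{Nk\rho'}$ cells for small $\rho'$ (or else that their entropy contribution $-\sum\mu(C)\log\mu(C)$ over the missed cells is $\le\rho' Nk$), which follows from the SMB theorem applied to $f^N$ with partition $\PC^N$, giving a.e. $-\tfrac{1}{k}\log\pi_0((\PC^N)^k(x))\to h_{\pi_0}(f^N;\PC)$ and hence a uniform count of "good" cells. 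The junk set $X\setminus\bigcup P_i$ in $\PC$ does not cause trouble for the separation claim precisely because the separation there is forced through the $P_i$'s; one only has to note that if both points land in the junk set at some coordinate there is nothing to check, and if they land in different $\PC$-elements at some coordinate then at least one of those elements is some $P_j$, so the $\delta<\delta_0$ bound still yields a contradiction.
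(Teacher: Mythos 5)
Your overall strategy (count cylinders meeting $\tilde X$, then inject them into a separated set) is close in spirit to the paper's, but the central injection step fails: your claim that the representatives $x_C$, $C\in\CC_k$, form an $(Nk,\delta)$-separated set is false. You argue that if $d(f^ix_C,f^ix_{C'})\le\delta<\delta_0$ at every coordinate, then $f^ix_C$ and $f^ix_{C'}$ lie in the same $\PC$-element, and you dismiss the junk set with ``both points are either in it or in a common $P_j$.'' But the remaining case --- one point in some $P_j$, the other in the junk set $X\setminus\bigcup P_i$ --- is exactly the one that breaks the argument: $\dist(P_j,\,X\setminus\bigcup P_i)$ is typically $0$ (the junk set accumulates on $\partial P_j$), so $\delta<\delta_0=\min_{i\neq j}\dist(P_i,P_j)$ gives no contradiction. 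Two distinct cylinders can therefore share arbitrarily close representatives, and the injection $\CC_k\hookrightarrow(\text{separated set})$ does not exist.

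The paper handles precisely this obstruction by weakening the injection to a map with controlled multiplicity: since $\delta$ is less than half the mutual distance of the $P_i$'s, a $\delta$-ball meets at most \emph{two} elements of $\PC$ (one $P_j$ plus possibly the junk set), so the map $\varphi$ from $\tilde X$ to a maximal $(k,\delta/2)$-separated set for $f^N$ has $|\varphi^{-1}(z)|\le 2^k$. This costs an extra $k\log 2$, which is absorbed by taking $N$ large enough that $(\log 2)/N<\rho/2$. Your choice of $N$ (to make $H_{\pi_0}(\PC)/N$ small) does not serve this purpose. Separately, your lower bound $\log|\CC_k|\ge k\,h_{\pi_0}(f^N;\PC)-o(k)$, uniformly over all $\tilde X$ with $\pi_0(\tilde X)\ge 1-\alpha$, is asserted via SMB but left as an acknowledged ``main obstacle''; the paper sidesteps it entirely with the clean conditional-entropy bound $H_{\pi_0}(\bigvee_{i=0}^{k-1}f^{-iN}\PC)\le H_{\pi_0}(\QC)+H_{\pi_0}(\,\cdot\mid\QC)$ for $\QC=\{\tilde X,X\setminus\tilde X\}$, which directly produces $\log 2+\log|\bigvee f^{-iN}\PC|_{\tilde X}|+\alpha k\log|\PC|$ and lets $\alpha<\rho/(2\log(s+1))$ control the error without any SMB input. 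You would need both fixes (multiplicity-$2^k$ map and the conditional-entropy decomposition, or an honest uniform SMB estimate) to make the argument go through.
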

                                              
\begin{theorem}\label{thm_e2_lb}
Assume that $\pi_0$ is $f$-invariant. Then the smallest channel capacity above which (E2) can be achieved for every $\ep>0$ satisfies%
\begin{equation}\label{eq_im_lb}
  C_0 \geq h_{\pi_0}(f).%
\end{equation}
If $\pi_0$ is not invariant, but there exists an invariant measure $\pi_*$ which is absolutely continuous w.r.t.~$\pi_0$, then%
\begin{equation}\label{eq_nim_lb}
  C_0 \geq h_{\pi_*}(f).%
\end{equation}
Here the cases $h_{\pi_0}(f) = \infty$ and $h_{\pi_*}(f) = \infty$, respectively, are included. They correspond to the situation, when the estimation objective cannot be achieved over a finite-capacity channel.% 
\end{theorem}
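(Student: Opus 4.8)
The plan is to derive the lower bound \eqref{eq_im_lb} by combining a data-processing/counting argument (analogous to the lower bound in Theorem \ref{thm_e1_bounds}) with Lemma \ref{lem_newhouse}, which converts a bound on separated sets on a large-measure set into a bound on the metric entropy of a power of $f$. Suppose (E2) is achieved for some $\ep>0$ over a channel of capacity $C=\log|\MC|$ under some coding/estimation policy. By \eqref{eq_E2}, the set $\tilde X_\ep := \{x_0 : \limsup_{t\to\infty} d(f^t(x_0),\hat x_t(x_0)) \le \ep\}$ has $\pi_0$-measure one; in particular, for any $\alpha\in(0,1)$ there exists $T=T(\alpha,\ep)$ such that the set $\tilde X := \{x_0 : d(f^t(x_0),\hat x_t(x_0)) \le 2\ep \text{ for all } t\ge T\}$ has $\pi_0(\tilde X) \ge 1-\alpha$ (using monotone convergence on the nested tails). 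On $\tilde X$ we can run the same argument as in Theorem \ref{thm_e1_bounds}: for each $n$, an $(n,4\ep)$-separated subset of $\tilde X$ (relative to the shifted time window $[T,T+n)$) injects into the set $\EC_{T+n}$ of possible estimator output strings of length $T+n$, whose cardinality is at most $|\MC|^{T+n}$. Because $f^T$ is a homeomorphism, $r_{\sep}(n,4\ep;\tilde X)$ and $r_{\sep}(n,4\ep;f^T(\tilde X))$ differ by a controlled amount (and $f^T(\tilde X)$ again has measure $\ge 1-\alpha$ by $f$-invariance), so passing $n\to\infty$ gives $C \ge h(f;\tilde X,4\ep)$ in the notation of Lemma \ref{lem_newhouse}, with $\pi_0(\tilde X)\ge 1-\alpha$.

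Next I would feed this into Lemma \ref{lem_newhouse}. Fix $\rho>0$ and an exhausting partition $\PC=\{P_1,\dots,P_s,X\setminus\bigcup P_i\}$ with compact $P_i$; the lemma supplies $\delta>0$, $\alpha\in(0,1)$ and $N=N(\rho)$ so that $\pi_0(\tilde X)\ge 1-\alpha$ forces $\tfrac{1}{N}h_{\pi_0}(f^N;\PC) \le h(f;\tilde X,\delta) + \rho$. The subtlety is matching the resolution: the lemma wants separation at scale $\delta$, while the policy only gives us a bound at scale $4\ep$. So I would argue in the correct order: first take $\ep>0$ small enough that $4\ep\le\delta$ (we are computing $C_0=\lim_{\ep\downarrow0}C_\ep$, so we only care about small $\ep$), which makes $h(f;\tilde X,4\ep)\le h(f;\tilde X,\delta)$ by monotonicity of $r_{\sep}$ in the radius. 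Combining the two displays yields, for the capacity $C_\ep$ needed at this $\ep$, the bound $C_\ep \ge \tfrac{1}{N}h_{\pi_0}(f^N;\PC) - \rho$. Using $h_{\pi_0}(f^N) = N\,h_{\pi_0}(f)$ (the power rule for metric entropy) together with $\sup_{\PC}h_{\pi_0}(f^N;\PC)=h_{\pi_0}(f^N)$ — and approximating the supremum by partitions of the required exhausting form, which is legitimate since such partitions are cofinal — we get $C_\ep \ge h_{\pi_0}(f) - \rho/N \ge h_{\pi_0}(f) - \rho$. Letting $\ep\downarrow0$ and then $\rho\downarrow0$ gives $C_0 \ge h_{\pi_0}(f)$, which is \eqref{eq_im_lb}. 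If $h_{\pi_0}(f)=\infty$ the same chain shows $C_\ep$ is unbounded, i.e.\ no finite-capacity channel works.

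For the second assertion \eqref{eq_nim_lb}, suppose $\pi_0$ is not invariant but $\pi_*$ is invariant with $\pi_* \ll \pi_0$. If (E2) holds $\pi_0$-a.s., then since $\pi_* \ll \pi_0$ the defining event of \eqref{eq_E2} also has $\pi_*$-measure one; hence the very same policy achieves (E2) for the invariant measure $\pi_*$, and the already-proved bound \eqref{eq_im_lb} applied to $\pi_*$ gives $C_0 \ge h_{\pi_*}(f)$.

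The main obstacle I anticipate is bookkeeping the three scales ($\ep$ for the policy, $2\ep$–$4\ep$ after the triangle inequality, and $\delta$ from Lemma \ref{lem_newhouse}) and the two "large-measure" thresholds (the $\alpha$ from Lemma \ref{lem_newhouse} and the $\alpha$ controlling $\pi_0(\tilde X)$) in a consistent order, plus justifying that restricting the entropy supremum to exhausting partitions with compact pieces still yields $h_{\pi_0}(f^N)$ — this is where inner regularity of $\pi_0$ on the compact metric space $X$ is used. The counting/injectivity step and the power rule for metric entropy are routine.
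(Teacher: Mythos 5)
Your proposal follows essentially the same route as the paper: a counting/injectivity bound of the form $C\geq h(f;\cdot,\text{scale})$ on a set of $\pi_0$-measure $\geq 1-\alpha$, fed into Lemma~\ref{lem_newhouse} with the quantifier order chosen so that $\ep$ is picked last, followed by the power rule for metric entropy and the supremum over exhausting partitions; the reduction of \eqref{eq_nim_lb} to \eqref{eq_im_lb} via absolute continuity is also the paper's move. Two small slips, neither of which affects the argument: (i) the separated set you count in the shifted window $[T,T+n)$ for $\tilde{X}$ is exactly an $(n,4\ep)$-separated set for $f^T(\tilde{X})$ in the standard sense, so you should apply Lemma~\ref{lem_newhouse} directly to $f^T(\tilde{X})$ (which has measure $\geq 1-\alpha$ by invariance, as you note) rather than speaking of the two counts ``differing by a controlled amount''; (ii) with $4\ep\leq\delta$, monotonicity of $r_{\sep}$ in the radius gives $h(f;\cdot,\delta)\leq h(f;\cdot,4\ep)$, not the reverse as written --- and this is in fact the direction you need in order to chain $\tfrac{1}{N}h_{\pi_0}(f^N;\PC)\leq h(f;\cdot,\delta)+\rho\leq h(f;\cdot,4\ep)+\rho\leq C+\rho$.
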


\begin{proof}
Consider a channel of capacity $C$ over which the objective (E2) can be achieved for every $\ep>0$. Fix coding and estimation policies achieving the objective for a fixed $\ep>0$. Pick $\delta>\ep$ and define for $x\in X$,%
\begin{equation*}
  T(x,\delta) := \min\left\{ k \in \N : d(f^t(x),\hat{x}_t(x)) \leq \delta \mbox{ for all } t \geq k \right\}.%
\end{equation*}
For every $K\in\N$ put $B^K(\delta) := \{x \in X : T(x,\delta) \leq K\}$. Since $x \mapsto d(f^t(x),\hat{x}_t(x))$ is a measurable map, $B^K(\delta)$ is a measurable set. From \eqref{eq_E2} it follows that%
\begin{equation}\label{eq_limitrel}
  \lim_{K\rightarrow\infty}\pi_0(B^K(\delta)) = \pi_0\Bigl(\bigcup_{K\in\N}B^K(\delta)\Bigr) = 1.%
\end{equation}
Now, for a given $\alpha \in (0,1)$, pick $K\in\N$ such that%
\begin{equation*}
  \pi_0(B^K(\delta)) \geq 1-\alpha.%
\end{equation*}
We consider the set $\tilde{X} := f^K(B^K(\delta))$. Observe that $x \in \tilde{X}$, $x = f^K(y)$, implies%
\begin{equation*}
  d(f^t(x),\hat{x}_{t+K}(y)) \leq \delta \mbox{\quad for all\ } t \geq 0.%
\end{equation*}
Now consider a maximal $(n,2\delta)$-separated set $E_n \subset \tilde{X}$ for some $n\in\N$. Recall that $\hat{x}_t = \gamma_t(q_0,q_1,\ldots,q_t)$. For arbitrary integers $m \geq m_0$, write%
\begin{align*}
  \hat{E}_{m_0,m} := &\{ (\gamma_{m_0}(q_{[0,m_0]}),\gamma_{m_0+1}(q_{[0,m_0+1]}),\ldots,\gamma_m(q_{[0,m]}))\\
		&\qquad : q_{[0,m]} \in \MC^{m+1} \}.%
\end{align*}
Define a map $\zeta:E_n \rightarrow \hat{E}_{K,K+n}$ by assigning to each $x\in E_n$ the unique element of $\hat{E}_{K,K+n}$ that is generated by the coding and estimation policy when the system starts in the initial state $x_0 = f^{-K}(x) \in B^K(\delta)$. We claim that $\zeta$ is injective. Indeed, assume that $\zeta(x_1) = \zeta(x_2)$ for some $x_1,x_2 \in E_n$. Then, for $0 \leq j \leq n$,%
\begin{align*}
  d(f^j(x_1),f^j(x_2)) &\leq d(f^j(x_1),\hat{x}_{K + j}) + d(\hat{x}_{K + j},f^j(x_2))\\
	&\leq \delta + \delta = 2\delta,%
\end{align*}
implying $x_1 = x_2$, because $E_n$ is $(n,2\delta)$-separated. Hence,%
\begin{equation*}
  \log |E_n| \leq \log |\hat{E}_{K,K+n}| \leq \log |\MC|^{K+n} = (K+n)C.%
\end{equation*}
We thus obtain%
\begin{align*}
  C &\geq \limsup_{n\rightarrow\infty}\frac{1}{K + n}\log|E_n|\\
	&= \limsup_{n\rightarrow\infty}\frac{1}{n}\log|E_n| = h(f;\tilde{X},2\delta),% 
\end{align*}
where $\pi_0(\tilde{X}) \geq 1 - \alpha$ and $\tilde{X}$ depends on the choice of $\delta > \ep$.

By Lemma \ref{lem_newhouse}, for a partition $\PC = \{P_1,\ldots,P_s,X\backslash \bigcup P_i\}$ and $\rho>0$, we can choose $\ep>0$, $\delta>\ep$, $\alpha>0$ and $N = N(\rho)$ such that the above construction yields the inequality%
\begin{equation*}
  \frac{1}{N}h_{\pi_0}(f^N;\PC) \leq h(f;\tilde{X},2\delta) + \rho \leq C + \rho.%
\end{equation*}
Taking the supremum over all partitions $\PC$ as in the lemma and using the power rule $h_{\pi_0}(f^N) = N h_{\pi_0}(f)$ yields%
\begin{equation*}
  h_{\pi_0}(f) \leq C + \rho.%
\end{equation*}
Since $\rho>0$ can be chosen arbitrarily, the inequality \eqref{eq_im_lb} follows.%

Now assume that $\pi_*$ is an $f$-invariant probability measure which is absolutely continuous w.r.t.~$\pi_0$. Then for every $\alpha>0$ there is $\beta>0$ such that $\pi_0(B) < \beta$ implies $\pi_*(B) < \alpha$. Looking at the complements of the sets $B^K(\delta)$, we see that the measure $\pi_0$ can be replaced by $\pi_*$ in \eqref{eq_limitrel}, which implies the inequality \eqref{eq_nim_lb}.%
\end{proof}

\section{Asymptotic estimation in expectation}\label{sec_e3}%

Finally, we consider the estimation objective (E3). As we will see, under mild assumptions on the admissible coding and estimation policies, $C_0$ is characterized by the metric entropy $h_{\pi_0}(f)$.%

\begin{theorem}\label{thm_e3_ub}
Assume that $\pi_0$ is an ergodic measure for $f$. Then the smallest channel capacity above which (E3) can be achieved for every $\ep>0$ satisfies%
\begin{equation*}
  C_0 \leq \log(1+\lfloor 2^{h_{\pi_0}(f)}\rfloor).%
\end{equation*}
\end{theorem}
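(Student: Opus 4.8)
The plan is to adapt the block-coding scheme used for the upper bound in Theorem~\ref{thm_e1_bounds}, but to replace the topological $(k,\ep)$-spanning sets of $\supp\pi_0$ by the smaller spanning sets furnished by Katok's entropy formula. The point is that under criterion (E3) a rare event with large error contributes only negligibly to the expectation, because $(X,d)$ is compact and hence $\diam X < \infty$.

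So fix a target accuracy $\ep' > 0$ and the exponent $p > 0$. If $h_{\pi_0}(f) = \infty$ there is nothing to prove, so assume $h_{\pi_0}(f) < \infty$; then the hypothesis $|\MC| \ge 1 + \lfloor 2^{h_{\pi_0}(f)} \rfloor$ gives $C = \log|\MC| > h_{\pi_0}(f)$. First I would pick $\ep \in (0,\ep')$ with $\ep^p \le \ep'/2$ and then $\delta \in (0,1)$ with $(\diam X)^p\,\delta \le \ep'/2$. Since $r_{\spn}(n,\ep,\delta)$ is non-increasing in $\ep$, Katok's characterization $h_{\pi_0}(f) = \lim_{\ep\downarrow0}\limsup_{n\to\infty}\frac1n\log r_{\spn}(n,\ep,\delta)$ (recalled in Section~\ref{sec_preliminaries}) yields $\limsup_{n\to\infty}\frac1n\log r_{\spn}(n,\ep,\delta) \le h_{\pi_0}(f) < C$ for this fixed pair $(\ep,\delta)$. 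Hence there is $k \in \N$ with $r_{\spn}(k,\ep,\delta) < |\MC|^k$; fix such a $k$, let $S$ with $|S| = r_{\spn}(k,\ep,\delta)$ be a $(k,\ep)$-spanning set of a set of $\pi_0$-measure $\ge 1-\delta$, and set $K := \{x \in X : d_{k,f}(x,y) \le \ep \text{ for some } y \in S\}$, so that $K$ is closed and $\pi_0(K) \ge 1-\delta$.

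Next I would run the block scheme: at time $t_j := jk$ the coder computes $f^k(x_{jk}) = f^{(j+1)k}(x_0)$; if this point lies in $K$, it selects $y \in S$ with $d(f^{(j+1)k+t}(x_0), f^t(y)) \le \ep$ for $0 \le t < k$ and transmits an index for $y$ over the next $k$ channel uses (possible since $|S| < |\MC|^k$), and otherwise it transmits an arbitrary block. At time $t_{j+1}$ the estimator decodes $y$ and outputs $\hat x_{(j+1)k+t} := f^t(y)$ for $0 \le t < k$ (arbitrary outputs for $t < k$); the maps so defined are causal. On the event $\{f^{(j+1)k}(x_0) \in K\}$ the error is at most $\ep$ throughout the block, and on its complement it is at most $\diam X$. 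Since $\pi_0$ is $f$-invariant, $\pi_0(\{x_0 : f^{(j+1)k}(x_0) \notin K\}) = \pi_0(X \setminus K) \le \delta$, so $E[d(x_t,\hat x_t)^p] \le \ep^p + (\diam X)^p \delta \le \ep'$ for every $t \ge k$; thus $\limsup_{t\to\infty} E[d(x_t,\hat x_t)^p] \le \ep'$, which is (E3).

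I expect the one point needing care to be the decoupling argument that lets the small-measure \emph{bad} set be discarded: one must verify that at \emph{every individual} time $t$ (not merely on a set of times of full density) the bad event has probability at most $\delta$ — this is exactly where $f$-invariance of $\pi_0$ enters — and that the resulting bound $\ep^p + (\diam X)^p\delta$ on $E[d(x_t,\hat x_t)^p]$ is uniform in $t \ge k$. Everything else — the elementary monotonicity turning Katok's limit into an inequality for a fixed $\ep$, the encodability of $S$, and the causality check — is routine, so I do not anticipate a serious obstacle.
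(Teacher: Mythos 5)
Your proposal is correct, and it takes a genuinely different route from the paper. The paper proves this upper bound via the Shannon--McMillan--Breiman theorem: it fixes a fine Borel partition $\PC$ of diameter $<(\ep/2)^{1/p}$, uses SMB to produce a typical set $\Sigma_{k,\delta}$ of length-$k$ itineraries with $|\Sigma_{k,\delta}|\le 2^{k(h+\delta)}\le|\MC|^k$, has the coder transmit the itinerary $i^k(f^{(j+1)k}(x))$ when it is typical, and then splits the expectation over the typical and atypical sets, using that the atypical set has measure at most $\ep/(2(\diam X)^p)$ for $k$ large. You instead invoke Katok's spanning-set characterization of metric entropy (recalled in Section~\ref{sec_preliminaries}), fix $\delta$ small first (this is legitimate because Katok's formula holds for every $\delta\in(0,1)$), extract a $(k,\ep)$-spanning set $S$ of a set of measure $\ge1-\delta$ with $|S|<|\MC|^k$, and have the coder transmit the index of a spanning point whose forward orbit shadows the true one. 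Your version is exactly parallel to the spanning-set scheme used for the topological-entropy upper bound in Theorem~\ref{thm_e1_bounds}, merely replacing topological spanning sets by Katok's measure-theoretic ones, and it manages the parameters $(\ep,\delta,k)$ somewhat more transparently since $\delta$ directly controls the bad-set measure without an extra SMB convergence step. What the paper's SMB/itinerary route buys in exchange is that the same typical-set machinery is what powers the Borel--Cantelli argument in Theorem~\ref{thm_e2_ub2}, so the two upper-bound proofs share infrastructure. All the individual steps you flag as needing care -- the monotonicity of $r_{\spn}(n,\ep,\delta)$ in $\ep$, the causality of the one-block-ahead encoder (valid because $f^k(x_{jk})$ is a function of the currently observed $x_{jk}$), and the use of $f$-invariance to make the bad-event probability $\pi_0(X\setminus K)\le\delta$ uniform over all block times -- are handled correctly.
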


\begin{proof}
Without loss of generality, we may assume that $h_{\pi_0}(f) < \infty$, since otherwise the statement trivially holds. Consider a channel with input alphabet $\MC$ satisfying%
\begin{equation*}
  |\MC| \geq 1 + \lfloor 2^{h_{\pi_0}(f)} \rfloor,%
\end{equation*}
which implies $C > h_{\pi_0}(f)$, and fix $\ep>0$. Choose a number $\delta>0$ satisfying%
\begin{equation}\label{eq_delta_choice}
  C \geq h_{\pi_0}(f) + \delta.%
\end{equation}
Let $\PC = \{P_1,\ldots,P_r\}$ be a finite Borel partition of $X$ whose elements have diameter smaller than $(\ep/2)^{1/p}$ and put%
\begin{equation*}
  h := h_{\pi_0}(f;\PC) = \lim_{n\rightarrow\infty}\frac{1}{n}H_{\pi_0}(\PC^n).%
\end{equation*}
Using the assumption that $\pi_0$ is ergodic, the Shannon-McMillan-Breiman Theorem states that%
\begin{equation}\label{eq_smb}
  -\frac{1}{n}\log\pi_0(\PC^n(x)) \rightarrow h \mbox{\quad for a.e.\ } x \in X,%
\end{equation}
where $\PC^n(x)$ is the element of $\PC^n$ containing $x$. Write $i^n(x)$ for the length $n$ itinerary of $x$ w.r.t.~$\PC$ (see the appendix), and consider the sets%
\begin{align*}
  \Sigma_{n,\delta} = \{ a \in \{1,\ldots,r\}^n : 2^{-n(h+\delta)} &\leq \pi_0(\{x:i^n(x)=a\})\\
	&\leq 2^{-n(h-\delta)} \}.%
\end{align*}
By Corollary \ref{cor_smb} in the appendix, we have%
\begin{equation}\label{eq_fullmeasset}
  \pi_0(\{x\in X : \exists m\in\N \mbox{ with } i^n(x) \in \Sigma_{n,\delta},\ \forall n \geq m\}) = 1%
\end{equation}
and $|\Sigma_{n,\delta}| \leq 2^{n(h+\delta)}$. By \eqref{eq_fullmeasset}, we can fix $k$ large enough so that%
\begin{equation}\label{eq_goodset_meas}
  \pi_0(\{x\in X : i^k(x) \in \Sigma_{k,\delta}\}) \geq 1 - \frac{\ep}{2(\diam X)^p}.%
\end{equation}
Using that $h \leq h_{\pi_0}(f)$, we also obtain%
\begin{equation}\label{eq_typicalset_card}
  |\Sigma_{k,\delta}| \leq 2^{k(h + \delta)} \leq 2^{kC} = |\MC|^k.%
\end{equation}
Thus, $k$ channel uses are sufficient to transmit an encoded element of $\Sigma_{k,\delta}$. Now we specify the coding and estimation policy. For any $j\in\Z_+$, in the time interval from $jk$ to $(j+1)k-1$, the coder encodes the information regarding the orbit in the time interval from $(j+1)k$ to $(j+2)k-1$, i.e., the itinerary $i^k(f^{(j+1)k}(x))$. For all $x$ with $i^k(f^{(j+1)k}(x)) \in \Sigma_{k,\delta}$ this information can be sent through the channel in a time interval of length $k$ by \eqref{eq_typicalset_card}. At time $(j+1)k + i$, $0 \leq i < k$, the estimator output $\hat{x}_{(j+1)k+i}$ is an arbitrary element of the partition set $P_{s_i}$ with $s_i$ being the symbol at position $i$ in the transmitted string $i^k(f^{(j+1)k}(x))$. Provided that $i^k(f^{(j+1)k}(x)) \in \Sigma_{k,\delta}$, the estimation error satisfies%
\begin{equation*}
  d(\hat{x}_{(j+1)k+i},f^{(j+1)k+i}(x)) \leq \diam P_{s_i} < \left(\frac{\ep}{2}\right)^{1/p}%
\end{equation*}
for $0 \leq i < k$, by the choice of the partition $\PC$. Putting $X_1 := \{ x : i^k(f^{(j+1)k}(x)) \in \Sigma_{k,\delta} \}$ and $X_2 := X \backslash X_1$, and using \eqref{eq_goodset_meas} together with invariance of $\pi_0$ under $f$, we obtain%
\begin{align*}
&  E[d(x_{(j+1)k+i},\hat{x}_{(j+1)k+i})^p] \\
  &= \int_{X_1} \pi_0(\rmd x) d(f^{(j+1)k+i}(x),\hat{x}_{(j+1)k+i}(x))^p\\
	&\qquad + \int_{X_2} \pi_0(\rmd x) d(f^{(j+1)k+i}(x),\hat{x}_{(j+1)k+i}(x))^p\\
  &\leq \pi_0(X_1) \frac{\ep}{2} + \pi_0(X_2) (\diam X)^p \leq \frac{\ep}{2} + \frac{\ep}{2} = \ep.%
\end{align*}
Hence, for all $t \geq k$, we have $E[d(x_t,\hat{x}_t)^p] \leq \ep$, implying that the estimation objective is achieved.%
\end{proof}

A stationary source can be expressed as a mixture of ergodic components; if these can be transmitted separately, the proof would also hold without the condition of ergodicity \cite[Thm.~11.3.1]{GrayIT}. If the process is not ergodic, but stationary, and if there only exist finitely many ergodic invariant measures in the support of $\pi_0$, then the proof above can be trivially modified such that the encoder sends a special message to inform the estimator which ergodic subsource is transmitted, and the coding scheme for the ergodic subsource could be applied.%

To obtain the analogous lower bound, we need to introduce some notation. Let $\mu$ be an ergodic $f$-invariant Borel probability measure on $X$. For $x\in X$, $n\in\N$, $\ep>0$ and $r\in(0,1)$ define the sets%
\begin{align*}
  B(x,n,\ep,r) := \Bigl\{y\in X : &\frac{1}{n}\bigl|\bigl\{0\leq k<n : d(f^k(x),f^k(y))\\
	&\leq \ep\bigr\}\bigr| > 1-r \Bigr\}.%
\end{align*}
A set $F\subset X$ is called $(n,\ep,r,\delta)$-spanning for $\delta\in(0,1)$ if%
\begin{equation*}
  \mu\Bigl(\bigcup_{x\in F}B(x,n,\ep,r)\Bigr) \geq 1-\delta.%
\end{equation*}
We write $r_{\spn}(n,\ep,r,\delta)$ for the minimal cardinality of such a set and define%
\begin{equation*}
  h_r(f,\mu,\ep,\delta) := \limsup_{n\rightarrow\infty}\frac{1}{n}\log r_{\spn}(n,\ep,r,\delta).%
\end{equation*}

According to \cite[Thm.~A]{Rea}, for every $\rho\in(0,1)$ we have%
\begin{equation*}
  \lim_{r\downarrow0}\lim_{\ep\downarrow0}h_r(f,\mu,\ep,\rho) = h_{\mu}(f).%
\end{equation*}
Since $h_r(f,\mu,\ep,\rho)$ is increasing as $r\downarrow0$ and as $\ep\downarrow0$, we can write this as%
\begin{align*}
  h_{\mu}(f) = \sup_{r>0}\sup_{\ep>0}h_r(f,\mu,\ep,\rho) = \sup_{(r,\ep)\in\R_{>0}^2}h_r(f,\mu,\ep,\rho).%
\end{align*}
In particular, this implies%
\begin{equation}\label{eq_limit}
  h_{\mu}(f) = \lim_{\ep\downarrow0} h_{\varphi_1(\ep)}(f,\mu,\varphi_2(\ep),\rho)%
\end{equation}
for any functions $\varphi_1,\varphi_2$ with $\lim_{\ep\downarrow0}\varphi_i(\ep) = 0$, $i=1,2$.%

In the following theorem, we will restrict ourselves to periodic coding and estimation schemes using finite memory. 

\begin{theorem}\label{thm_e3_lb}%
Assume that $\pi_0$ is an ergodic measure for $f$. Additionally, assume that there exists $\tau>0$ so that the coder map $\delta_t$ is of the form%
\begin{equation*}
  q_t = \delta_t(x_{[t-\tau+1,t]})%
\end{equation*}
and $\delta_{t + \tau} \equiv \delta_t$. Further assume that the estimator map is of the form%
\begin{equation*}
  \hat{x}_t = \gamma_t(q_{[t-\tau+1,t]})%
\end{equation*}
and also $\gamma_{t+\tau} \equiv \gamma_t$. Then the smallest channel capacity above which (E3) can be achieved for every $\ep>0$ satisfies%
\begin{equation*}
  C_0 \geq h_{\pi_0}(f).%
\end{equation*}
\end{theorem}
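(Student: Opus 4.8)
The plan is to mimic the lower-bound argument of Theorem \ref{thm_e2_lb}, but to exploit the periodic finite-memory structure of the coder and estimator so that the estimation-in-expectation criterion (E3) — which only controls the \emph{average} error — can be converted into a statement about the measure of a set on which the orbit is well approximated for a large \emph{fraction} of times. This is precisely what the averaged spanning quantity $h_r(f,\pi_0,\ep,\delta)$ of Rokhlin-type (from \cite{Rea}) is designed to capture, and the limiting identity \eqref{eq_limit} will let us pass from $h_r$ back to the metric entropy $h_{\pi_0}(f)$.

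First I would fix a channel of capacity $C$ over which (E3) is achieved for every $\ep>0$, and fix coding/estimation policies achieving it for a given $\ep$. Using Markov's inequality on $\limsup_t E[d(f^t(x),\hat{x}_t(x))^p] \leq \ep$, I would deduce that for any threshold $\eta>0$ the asymptotic fraction (in $t$) of times at which $d(f^t(x),\hat{x}_t(x)) > \eta$ is small on a large-measure set; more carefully, by a Fubini/averaging argument over a long time block together with invariance of $\pi_0$, there is a set $\tilde{X}$ with $\pi_0(\tilde{X}) \geq 1-\delta$ and a block length such that every $x\in\tilde{X}$ satisfies $\frac{1}{n}|\{0\le k<n : d(f^k(x),\hat{x}_{\cdot}(x)) \le \eta\}| > 1-r$ for suitable $r=r(\ep,\eta)$, with $\eta\downarrow0$ and $r\downarrow0$ as $\ep\downarrow0$. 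Here the periodicity $\delta_{t+\tau}\equiv\delta_t$, $\gamma_{t+\tau}\equiv\gamma_t$ and the finite memory $\tau$ are what make the estimate $\hat{x}_t$ on a block depend on only boundedly many channel symbols (essentially $\lceil n/\tau\rceil\cdot\tau$ symbols plus $O(\tau)$), so the counting bound below is uniform in the block position.

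Next, for each such block I would define, exactly as in the proof of Theorem \ref{thm_e2_lb}, a map from a maximal set realizing $r_{\spn}(n,\eta,r,\delta)$ into the set of possible estimator outputs over the block, and argue it is injective on a slightly shrunk version: two initial conditions whose estimator strings agree, and each of which is $\eta$-close to its own estimate for more than a $(1-r)$ fraction of the block, must be $2\eta$-close for more than a $(1-2r)$ fraction of the block, hence cannot both be representatives of points that are far apart in the $B(\cdot,n,\cdot,\cdot)$-sense. This yields $\log r_{\spn}(n,\varphi_1(\ep),\varphi_2(\ep),\delta) \le (n + O(\tau))C$, so dividing by $n$ and letting $n\to\infty$ gives $h_{\varphi_1(\ep)}(f,\pi_0,\varphi_2(\ep),\delta) \le C$ for appropriate functions $\varphi_1,\varphi_2$ vanishing at $0$. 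Letting $\ep\downarrow0$ and invoking \eqref{eq_limit} with $\rho=\delta$ yields $h_{\pi_0}(f)\le C$, and since $C$ is any capacity strictly above $C_0$ is handled the usual way, we conclude $C_0 \ge h_{\pi_0}(f)$; the case $h_{\pi_0}(f)=\infty$ then says no finite-capacity channel suffices.

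The main obstacle I anticipate is the first step: translating the single scalar bound $\limsup_t E[d^p]\le\ep$ into a uniform \emph{pathwise} fractional-time approximation statement that holds on a set of measure $\geq 1-\delta$ and that is compatible with the definition of $B(x,n,\ep,r)$. The naive Markov inequality only gives, for each fixed large $t$, that $\pi_0(d(f^t(x),\hat{x}_t(x))>\eta)\le \ep/\eta^p$; upgrading this to "a large fraction of times $k<n$ are good, simultaneously for a.e.\ $x$ in a large set" requires averaging over $k$ inside the block, using invariance to push the time index, and then a second Markov/Fubini step in $x$. One must also be careful that the "good fraction" is measured against the \emph{same} estimator path, which is where the bounded memory $\tau$ and periodicity are genuinely used — without them the estimate on a block could in principle depend on the entire past and the injectivity counting would fail.
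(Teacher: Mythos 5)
Your overall strategy (invoke the averaged-spanning entropy $h_r$ of \cite{Rea} and the limit identity \eqref{eq_limit}, translate the scalar bound $\limsup_t E[d^p]\le\ep$ into a statement that for a large-measure set of initial conditions most times in a block are $\eta$-good, then count estimator output strings) is the same as the paper's, and the final spanning/counting step is essentially right — though it is cleaner to do it the way the paper does: choose one representative per estimator string and show the \emph{image} of the representatives is a spanning set, rather than trying to inject a pre-chosen spanning set into the output strings.

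The genuine divergence is in how the pathwise fractional-time statement is obtained, and this is also where you mislocate the role of the hypotheses. You propose, for each block length $n$, a Fubini-plus-Chebyshev argument: invariance of $\pi_0$ gives $\int \frac{1}{n}\sum_{i<n}\chi_{\{d(f^{T+i}(\cdot),\hat x_{T+i}(\cdot))\ge\eta\}}\,d\pi_0\le \sqrt{\ep}$, and a second Markov step yields an $n$-dependent set $G_n$ with $\pi_0(G_n)\ge 1-\rho$ on which the fraction of bad times is at most $\sqrt{\ep}/\rho$. The paper instead fixes a \emph{single} set $\tilde X_{m_0}$ valid for all large $n$ by applying Birkhoff's ergodic theorem to $f^\tau$, and \emph{this} is the step that genuinely uses the periodicity/finite-memory hypothesis: from $\delta_{t+\tau}\equiv\delta_t$, $\gamma_{t+\tau}\equiv\gamma_t$ and the memory window $\tau$ one derives the intertwining identity $\hat x_{t+\tau}=\hat x_t\circ f^\tau$, hence $X_2^{t+\tau}=X_2^t$, so there are only finitely many sets $X_2^t$ and the occupation frequencies become Birkhoff averages for $f^\tau$. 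You instead claim periodicity is needed so that ``the estimate on a block depends on only boundedly many channel symbols'' and so the ``counting bound is uniform'' — but that is not where it is used: even with a fully general causal coder $\hat x_{T+i}$ depends on $(q_0,\dots,q_{T+i})$, so $|E_n|\le|\MC|^{T+n}$ always, with no periodicity or memory restriction. In fact, since the definition of $r_{\spn}(n,\ep,r,\delta)$ already allows the spanning set to depend on $n$, your per-block Chebyshev route appears to go through without the periodicity and finite-memory hypotheses at all. That would be a stronger statement than the theorem as stated (and the paper itself flags those hypotheses as possibly removable in the Discussion), so if you pursue it you should spell out the quantitative choices ($\eta=\ep^{1/(2p)}$, $r=\sqrt{\ep}/\rho$, etc.) carefully and verify that the parameters fed into \eqref{eq_limit} all tend to $0$ as $\ep\downarrow0$ with $\rho$ held fixed.
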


Before presenting the proof, we note the following.

\begin{remark}
For optimal zero-delay codes, the structural results available for finite horizon problems (see e.g. \cite{Witsenhausen,WalrandVaraiya,Teneketzis,YukIT2010arXiv}  and infinite horizon problems \cite{wood2016optimal,YukLinZeroDelay,AsnaniWeissman} establish that an optimal encoder only uses the most recent state and some information variable (available at both the encoder and the decoder, such as the past transmitted symbols or a conditional measure on the state given the information at the decoder). In particular, \cite{wood2016optimal} established the optimality of stationary and deterministic policies under such a structure. On the other hand, \cite[Thm.~4]{wood2016optimal} shows that for finitely valued Markov sources which are aperiodic, irreducible, and stationary, periodic coding policies (as that considered in Theorem \ref{thm_e3_lb}) indeed perform arbitrarily well for the zero-delay setup also (with a rate of convergence of the order $\frac{1}{T}$ with $T$ being the period); that this holds for the non-causal setup is well known in the information theory literature through asymptotic optimality of block codes \cite{GrayIT}: In particular, for stationary and memoryless sources, a rate of convergence for such a result was shown to be of the type $O\big(\frac{\log T}{T}\big)$ \cite{pilc1967coding}, \cite{zhang1997redundancy}. See also \cite{kostina2012fixed} for a detailed literature review and further finite blocklength performance bounds. Thus, the use of finite memory policies in Theorem \ref{thm_e3_lb} is a practical construction, and it is our goal to investigate the optimal encoding in the absence of such an apriori assumption in the future.%
\end{remark}

\begin{proof}
The proof is subdivided into three steps.%

\emph{Step 1.} Fix $\ep>0$ and consider a coding and estimation policy that achieves \eqref{eq_E3} for $\ep/2$. Then we find $T\in\N$ so that%
\begin{equation}\label{eq_eventual_exp}
  E[d(x_t,\hat{x}_t)^p] \leq \ep \mbox{\quad for all\ } t \geq T.%
\end{equation}
For each $t \geq T$ we decompose $X$ into the two sets%
\begin{align*}
  X_1^t &:= \left\{ z\in X : d(z,\hat{x}_t(f^{-t}(z)))^p < \sqrt{\ep} \right\},\\
	X_2^t &:= \left\{ z\in X : d(z,\hat{x}_t(f^{-t}(z)))^p \geq \sqrt{\ep} \right\}.%
\end{align*}
Now assume to the contrary that for some $t\geq T$,%
\begin{equation}\label{eq_contra_ass}
  \pi_0(X_2^t) > \sqrt{\ep}.%
\end{equation}
In contradiction to \eqref{eq_eventual_exp}, this implies%
\begin{align*}
  E[d(x_t,\hat{x}_t)^p] &= \int_X \pi_0(\rmd x) d(f^t(x),\hat{x}_t(x))^p\allowdisplaybreaks\\
	&= \int_X (f^{-t}_*\pi_0)(\rmd x) d(f^t(x),\hat{x}_t(x))^p\allowdisplaybreaks\\
	&= \int_X \pi_0(\rmd z) d(z,\hat{x}_t(f^{-t}(z)))^p\allowdisplaybreaks\\
	&\geq \int_{X_2^t} \pi_0(\rmd z) d(z,\hat{x}_t(f^{-t}(z)))^p\allowdisplaybreaks\\
	&\geq \pi_0(X_2^t) \sqrt{\ep} \stackrel{\eqref{eq_contra_ass}}{>} \sqrt{\ep}^2 = \ep,%
\end{align*}
where we used that $\pi_0$ is $f$-invariant. Hence, we obtain%
\begin{equation}\label{eq_x2_meas}
  \pi_0(X_2^t) \leq \sqrt{\ep}.%
\end{equation}

\emph{Step 2.} By Birkhoff's Ergodic Theorem, the measure of a set determines how frequently this set is visited over time. We want to know the frequency of how often the sets $X_2^t$ are visited by the trajectories of $f$. More precisely, we want to achieve that%
\begin{equation}\label{eq_birkhoff_na}
  \limsup_{n\rightarrow\infty}\frac{1}{n}\sum_{i=0}^{n-1}\chi_{X_2^{T+i}}(f^{T+i}(x)) \leq \sqrt{\ep}%
\end{equation}
for $\pi_0$-almost all $x\in X$, say for all $x \in \tilde{X} \subset X$ with $\pi_0(\tilde{X}) = 1$. We can prove \eqref{eq_birkhoff_na} under the assumption about the coding and estimation policies. Using the notation $\alpha_t(z) := d(z,\hat{x}_t(f^{-t}(z)))^2$, the set $X_2^t$ can be written as%
\begin{equation*}
  X_2^t := \alpha_t^{-1}([\sqrt{\ep},\infty)).%
\end{equation*}
By our assumptions,%
\begin{align*}
  \hat{x}_t &= \gamma_t(  [\delta_s(x_{[s-\tau+1,s]})]_{s\in [t-\tau+1,t]})\\
	&= \gamma_t( [\delta_s( f^{s-\tau+1}(x_0),\ldots,f^s(x_0) )]_{s\in [t-\tau+1,t]}).%
\end{align*}
Hence,%
\begin{align*}
&  \hat{x}_{t+\tau}(x_0) \\
  &= \gamma_{t+\tau}( [\delta_s( f^{s-\tau+1}(x_0),\ldots,f^s(x_0) )]_{s\in [t+1,t+\tau]})\\
	&= \gamma_t( [\delta_s(f^{s+1}(x_0),\ldots,f^{s+\tau}(x_0) )]_{s \in [t-\tau+1,t]})\\
	&= \gamma_t( [\delta_s(f^{s-\tau+1} (f^{\tau}(x_0) ),\ldots,f^s(f^{\tau}(x_0)))]_{s\in[t-\tau+1,t]}) \\
	&	= \hat{x}_t(f^{\tau}(x_0)),%
\end{align*}
implying $\hat{x}_{t+\tau} = \hat{x}_t \circ f^{\tau}$ for all $t \in \Z_+$, and thus%
\begin{align*}
&  \alpha_{t+\tau}(z) = d(z,\hat{x}_{t+\tau}(f^{-(t+\tau)}(z)))^2 = d(z,\hat{x}_t(f^{-t}(z)))^2 \\
& \quad \quad = \alpha_t(z).%
\end{align*}
Hence, we have the same periodicity for the sets $X_2^t$, i.e.,%
\begin{equation*}
  X_2^{t+\tau} \equiv X_2^t,\quad \forall t\in\Z_+.%
\end{equation*}
Writing each $n\in\N$ as $n = k_n \tau + r_n$ with $k_n \in \Z_+$ and $0 \leq r_n < \tau$, we find that%
\begin{align*}
  & \frac{1}{n}\sum_{i=0}^{n-1}\chi_{X_2^{T+i}}(f^{T+i}(x))  \\
  &\leq \frac{1}{\tau}\sum_{j=0}^{\tau-1} \frac{1}{k_n}  \sum_{i=0}^{k_n-1}  \chi_{X_2^{T+j}}(f^{T+i\tau+j}(x)) + \frac{\tau}{n}.%
\end{align*}
Hence, Birkhoff's Ergodic Theorem applied to $f^{\tau}$ implies%
\begin{equation}
  \lim_{n\rightarrow\infty}\frac{1}{n}\sum_{i=0}^{n-1}\chi_{X_2^{T+i}}(f^{T+i}(x)) \leq \frac{1}{\tau}\sum_{j=0}^{\tau-1} \pi_0(X_2^{T+j}) \leq \sqrt{\ep} \label{BET1}
\end{equation}
for all $x$ from a full measure set $\tilde{X}\subset X$. Now, we can write $\tilde{X}$ as the union of the sets%
\begin{equation*}
  \tilde{X}_m := \left\{ x\in X : \frac{1}{n}\sum_{i=0}^{n-1}\chi_{X_2^{T+i}}(f^{T+i}(x)) \leq 2\sqrt{\ep},\ \forall n \geq m \right\}.%
\end{equation*}
Since $\tilde{X}_m \subset \tilde{X}_{m+1}$ for all $m$, by \eqref{BET1} we find $m_0$ with $\pi_0(\tilde{X}_{m_0}) \geq 1 - \rho$, where $\rho$ is a given number in $(0,1)$. Hence, we have%
\begin{equation*}
  \frac{1}{n}\sum_{i=0}^{n-1}\chi_{X_2^{T+i}}(f^{T+i}(x)) \leq 2\sqrt{\ep} \mbox{\quad for all\ } n \geq m_0,\ x \in \tilde{X}_{m_0}.%
\end{equation*}
Then for $n \geq m_0$, the number of $i$'s in $\{0,1,\ldots,n-1\}$ such that $f^{T+i}(x) \in X_2^{T+i}$ is $\leq 2n\sqrt{\ep}$.%

\emph{Step 3.} For every $n\geq m_0$ consider the set%
\begin{align*}
  E_n := \bigl\{ (\hat{x}_T(x),\hat{x}_{T+1}(x),\ldots,\hat{x}_{T+n-1}(x)) \in X^n : x \in \tilde{X}_{m_0} \bigr\}.%
\end{align*}
The cardinality of $E_n$ is dominated by the largest possible number of coding symbols the estimator has available at time $T+n-1$, i.e.,%
\begin{equation*}
  |E_n| \leq |\MC|^{T+n}.%
\end{equation*}
Define for each $n\in\N$ a set $F_n \subset X$ with $|F_n| = |E_n|$ by choosing for each element%
\begin{equation*}
  (\hat{x}_T(x),\hat{x}_{T+1}(x),\ldots,\hat{x}_{T+n-1}(x)) \in E_n%
\end{equation*}
a unique representative $x \in \tilde{X}_{m_0}$ and letting $F_n$ be the set of all these representatives.%

Now pick an arbitrary $y\in \tilde{X}_{m_0}$ and consider $(\hat{x}_T(y),\ldots,\hat{x}_{T+n-1}(y)) \in E_n$. Let $x \in F_n$ be the unique representative of this string, implying $\hat{x}_{T+i}(y) = \hat{x}_{T+i}(x)$ for $0 \leq i < n$. Then%
\begin{align}\label{eq_triangle}
  d(f^{T+i}(y),f^{T+i}(x)) &\leq d(f^{T+i}(y),\hat{x}_{T+i}(y)) \nonumber\\
	&\ + d(\hat{x}_{T+i}(x),f^{T+i}(x))%
\end{align}
holds for $0 \leq i < n$. From Step 2 we know that $f^{T+i}(y) \in X_1^{T+i}$ for $\geq n(1 - 2\sqrt{\ep})$ $i$'s and the same holds for $x$ in place of $y$. Hence, the number if $i$'s, where both holds, is $\geq n(1 - 4 \sqrt{\ep})$. In this case, \eqref{eq_triangle} implies%
\begin{equation*}
  d(f^{T+i}(y),f^{T+i}(x)) \leq 2\sqrt[2p]{\ep}.%
\end{equation*}
Hence,%
\begin{equation*}
  f^T(y) \in B(f^T(x),n,2\sqrt[2p]{\ep}, 4\sqrt{\ep}).%
\end{equation*}
Since $\pi_0(f^T(\tilde{X}_{m_0})) = \pi_0(\tilde{X}_{m_0}) \geq 1 - \rho$, we thus obtain%
\begin{equation*}
  \pi_0\Bigl( \bigcup_{x\in F_n} B(f^T(x),n,2\sqrt[2p]{\ep},4\sqrt{\ep}) \Bigr) \geq 1 - \rho,%
\end{equation*}
showing that the set $f^T(F_n)$ is $(n,2\sqrt[2p]{\ep},4\sqrt{\ep},\rho)$-spanning. Consequently,%
\begin{equation*}
  h_{4\sqrt{\ep}}(f,\pi_0,2\sqrt[2p]{\ep},\rho) \leq \limsup_{n\rightarrow\infty}\frac{1}{n} \log |\MC|^{T+n} = C.%
\end{equation*}
Since $\ep>0$ was chosen arbitrarily, by \eqref{eq_limit} this implies%
\begin{equation*}
  C \geq \lim_{\ep\downarrow0}h_{4\sqrt{\ep}}(f,\pi_0,2\sqrt[2p]{\ep},\rho) = h_{\pi_0}(f),%
\end{equation*}
completing the proof.%
\end{proof}

\section{Discussion}\label{sec_discussion}

{\bf Hierarchy of estimation criteria.} Our results together with the results in the paper \cite{MPo} by Matveev and Pogromsky yield a hierarchy of estimation criteria for a deterministic nonlinear dynamical system%
\begin{equation*}
  x_{t+1} = f(x_t)%
\end{equation*}
with a continuous map (or a homeomorphism) $f:X \rightarrow X$ on a compact metric space $(X,d)$. These estimation criteria, ordered by increasing strength, are the following.%
\begin{enumerate}
\item[(1)] Asymptotic estimation in expectation (criterion (E3)):%
\begin{equation*}
  \limsup_{t\rightarrow\infty}\int_X \pi_0(\rmd x_0) d(f^t(x_0),\hat{x}_t(x_0))^p \leq \ep,\quad p>0%
\end{equation*}
with a Borel probability measure $\pi_0$ on $X$.%
\item[(2)] Asymptotic almost sure estimation (criterion (E2)):%
\begin{equation*}
  \pi_0(\{x_0\in X : \limsup_{t\rightarrow\infty} d(f^t(x_0),\hat{x}_t(x_0)) \leq \ep \}) = 1%
\end{equation*}
with a Borel probability measure $\pi_0$ on $X$.%
\item[(3)] Eventual almost sure estimation (criterion (E1)):%
\begin{equation*}
  \pi_0( \{ x_0 \in X : d(f^t(x_0),\hat{x}_t(x_0)) \leq \ep \} ) = 1 \mbox{\quad for all\ } t \geq T(\ep)%
\end{equation*}
with a Borel probability measure $\pi_0$ on $X$.%
\item[(4)] Deterministic estimation (called \emph{observability} in \cite{MPo}):%
\begin{equation*}
  d(x_t,\hat{x}_t) \leq \ep \mbox{\quad for all\ } t \geq T(\ep).%
\end{equation*}
\end{enumerate}
Finally, two even stronger estimation criteria are considered in \cite{MPo}, which do not fit exactly into the above hierarchy, since they are based on an initial error $d(x_0,\hat{x}_0) \leq \delta$, which is known to both coder and estimator, and can be much smaller than the desired accuracy $\ep$. In terms of this maximal initial error $\delta$, the criteria can be formulated as follows:%
\begin{enumerate}
\item[(5)] There are $\delta_*>0$ and $G \geq 1$ such that for all $\delta \in (0,\delta_*)$,%
\begin{equation*}
  d(x_t,\hat{x}_t) \leq G \delta \mbox{\quad for all\ } t \geq 0.%
\end{equation*}
This is called \emph{regular observability} in \cite{MPo}.%
\item[(6)] There are $\delta_*>0$, $G \geq 1$ and $g\in(0,1)$ such that for all $\delta \in (0,\delta_*)$,%
\begin{equation*}
  d(x_t,\hat{x}_t) \leq G \delta g^t \mbox{\quad for all\ } t \geq 0.%
\end{equation*}
This is called \emph{fine observability} in \cite{MPo}.%
\end{enumerate}
Let us denote by $C_0^{(i)}$, $1 \leq i \leq 6$, the smallest channel capacity for the above criteria. Then, modulo specific assumptions such as invertibility of $f$, ergodicity of $\pi_0$ and finiteness of entropies, our results together with those in \cite{MPo} essentially yield the following chain of inequalities:%
\begin{align*}
      h_{\pi_0}(f) &\stackrel{\mathrm{(a)}}{\leq} C_0^{(1)} \stackrel{\mathrm{(b)}}{\leq} h_{\pi_0}(f)\\
	                 &\stackrel{\mathrm{(c)}}{\leq} C_0^{(2)} \stackrel{\mathrm{(d)}}{\leq} h_{\tp}(f;\supp\pi_0)\\
	                 &\stackrel{\mathrm{(e)}}{\leq} C_0^{(3)} \stackrel{\mathrm{(f)}}{\leq} h_{\tp}(f;\supp\pi_0)\\
	 \stackrel{\mathrm{(g)}}{\leq} h_{\tp}(f) &\stackrel{\mathrm{(h)}}{\leq} C_0^{(4)} \stackrel{\mathrm{(i)}}{\leq} h_{\tp}(f)\\
	                 &\stackrel{\mathrm{(j)}}{\leq} C_0^{(5)}\\
								   &\stackrel{\mathrm{(k)}}{=} C_0^{(6)}.%
\end{align*}
For (h)--(k), see \cite[Thm.~8 and Lem.~14]{MPo}, and note that (d) can be replaced by $C_0^{(2)} \leq h_{\pi_0}(f)$ under the assumptions of Theorem \ref{thm_e2_ub2}. Recently, in \cite{MP2} the notion of \emph{restoration entropy} was proposed as an intrinsic quantity of a system which characterizes $C_0^{(5)} = C_0^{(6)}$. The results in \cite{MPo,MP2} show that this quantity can be strictly larger than $h_{\tp}(f)$.%

Except for compactness of $X$, continuity of $f$ and $f$-invariance of $\pi_0$, only the following inequalities rely on additional assumptions:%
\begin{enumerate}
\item[(a):] Invertibility of $f$, ergodicity of $\pi_0$ and periodic coding and estimation schemes using finite memory (see Theorem \ref{thm_e3_lb}).%
\item[(b):] Ergodicity of $\pi_0$ (see Theorem \ref{thm_e3_ub}).%
\item[(c):] Invertibility of $f$ (see Theorem \ref{thm_e2_lb}).%
\item[(e):] Invertibility of $f$ (see \cite[Thm.~3.3]{KYu}).%
\end{enumerate}

{\bf Relations to ergodic theory.} The results presented in this paper can be seen in analogy to certain fundamental results of ergodic theory about modeling a dynamical system by a symbolic system, i.e., a subshift of a full shift on $n$ symbols. For instance, Krieger's generator theorem roughly says that an ergodic system can be modeled as a subshift over an alphabet of $l > 2^h$ symbols, where $h$ is the entropy of the system. In our context, the modeling of the system is accomplished by the coder which generates an infinite sequence of symbols $(q_0,q_1,q_2,\ldots)$ over the alphabet $\MC$ for any initial state $x_0$. However, one should be careful with this analogy, because we are allowing that the coder is defined by a time-varying function, which not necessarily results in a commutative diagram relating the action of $f$ (or some iterate of $f$) and the action of a shift operator. For topological dynamical systems, the theory of entropy structures and symbolic extensions answers the question to which extent a system can be modeled by a symbolic system (under preservation of the topological structure), cf.~\cite{Downarowicz} for an excellent overview of this theory.%

{\bf Open questions.} Since in most of our results we had to use conditions more specific than $f$-invariance of $\pi_0$, the question remains to which extent those results also hold without these specific assumptions. Particularly strong assumptions have been used in Theorem \ref{thm_e2_ub2} and in Theorem \ref{thm_e3_lb}. We have not been able to find examples where these assumptions are violated and the results do not hold, so this is definitely a topic for future research. Finally, we remark that the assumption of invertibility of $f$ was only used to obtain lower bounds for each of the objectives (E1)--(E3), while we did not use this assumption to estimate $C_0$ from above. Hence, another open question is whether the lower bounds also hold when $f$ is not invertible.%

\section{Corollaries and examples}\label{sec_corex}

For differentiable maps, smooth ergodic theory provides a relation between the metric entropy and the Lyapunov exponents of a map, whose existence follows from Oseledec's Theorem (see Theorem \ref{thm_met} in the appendix). In particular, we have the following corollary of our main theorems.%

\begin{corollary}
Assume that $f$ is a $C^1$-diffeomorphism on a compact Riemannian manifold preserving an ergodic measure $\pi_0$. Then, in the setting of Theorem \ref{thm_e2_ub2} or Theorem \ref{thm_e3_ub},%
\begin{equation*}
  C_0 \leq \log(1 + \lfloor 2^{\int \pi_0(\rmd x)\lambda^+(x)} \rfloor),%
\end{equation*}
where $\lambda^+(x)$ is a shortcut for the sum of the positive Lyapunov exponents at $x$. If $f$ is a $C^2$-diffeomorphism and $\pi_0$ an SRB measure (not necessarily ergodic), then, in the setting of Theorem \ref{thm_e2_lb} or Theorem \ref{thm_e3_lb},%
\begin{equation*}
  C_0 \geq \int \pi_0(\rmd x) \lambda^+(x).%
\end{equation*}
\end{corollary}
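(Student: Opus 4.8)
The plan is to treat the corollary as a direct consequence of the four capacity bounds already proved in terms of the metric entropy $h_{\pi_0}(f)$ — Theorems \ref{thm_e2_ub2} and \ref{thm_e3_ub} for the upper bound, Theorems \ref{thm_e2_lb} and \ref{thm_e3_lb} for the lower bound — combined with the two classical bridges of smooth ergodic theory between metric entropy and Lyapunov exponents. Concretely, the entire argument reduces to substituting the Ruelle inequality $h_{\pi_0}(f) \le \int \pi_0(\rmd x)\,\lambda^+(x)$ (valid for $C^1$ diffeomorphisms) into the upper bounds, and the Pesin entropy formula $h_{\pi_0}(f) = \int \pi_0(\rmd x)\,\lambda^+(x)$ (valid for SRB measures of $C^{1+\alpha}$ diffeomorphisms, in particular $C^2$ ones) into the lower bounds.

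For the upper bound I would first invoke Oseledec's theorem (Theorem \ref{thm_met}) so that the Lyapunov exponents, hence $\lambda^+(x)$, are defined for $\pi_0$-a.e.\ $x$, and observe that $\lambda^+$ is $\pi_0$-integrable since it is pointwise dominated by $\dim(M)\cdot\log^+\|\rmD f\|_\infty$. The Ruelle inequality then gives $h_{\pi_0}(f) \le \int \pi_0(\rmd x)\,\lambda^+(x)$. Because $t \mapsto \log(1+\lfloor 2^t\rfloor)$ is nondecreasing, feeding this into the conclusion $C_0 \le \log(1+\lfloor 2^{h_{\pi_0}(f)}\rfloor)$ of Theorem \ref{thm_e2_ub2} (respectively Theorem \ref{thm_e3_ub}) yields $C_0 \le \log(1+\lfloor 2^{\int \pi_0(\rmd x)\lambda^+(x)}\rfloor)$, as claimed.

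For the lower bound, Theorem \ref{thm_e2_lb} (respectively Theorem \ref{thm_e3_lb}) already delivers $C_0 \ge h_{\pi_0}(f)$, so it remains to evaluate $h_{\pi_0}(f)$. The defining feature of an SRB measure — absolute continuity of its conditional measures along unstable manifolds — is exactly the hypothesis under which the Pesin entropy formula is an equality for $C^2$ diffeomorphisms, giving $h_{\pi_0}(f) = \int \pi_0(\rmd x)\,\lambda^+(x)$. When $\pi_0$ is ergodic this is immediate; when it is not (so that one invokes Theorem \ref{thm_e2_lb}, which only requires $f$-invariance, rather than Theorem \ref{thm_e3_lb}), I would pass to the ergodic decomposition $\pi_0 = \int \mu_\omega\,\rmd P(\omega)$, use that ergodic components of an SRB measure are themselves SRB, apply Pesin's formula to each $\mu_\omega$, and integrate, relying on the affinity of $\mu \mapsto h_{\mu}(f)$ and on Fubini for $\mu \mapsto \int \lambda^+\,\rmd\mu$. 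This gives $C_0 \ge h_{\pi_0}(f) = \int \pi_0(\rmd x)\,\lambda^+(x)$.

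I do not expect a genuine analytic obstacle: the corollary is a repackaging of results already in the literature. The steps that require care are purely bookkeeping. One must match the regularity hypotheses precisely — the Ruelle inequality needs only $C^1$, whereas equality in Pesin's formula genuinely requires $C^{1+\alpha}$ regularity, which is why $C^2$ is assumed for the lower bound — ensure that $\lambda^+$ is integrable before writing the integral, and justify the ergodic-decomposition step in the non-ergodic SRB case, in particular that the ergodic components of an SRB measure are again SRB and that metric entropy is affine under the decomposition.
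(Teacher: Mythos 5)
Your proposal matches the paper's proof exactly: the paper invokes the Margulis--Ruelle inequality (Theorem \ref{thm_ruelle_pesin}) to substitute $\int \pi_0(\rmd x)\lambda^+(x)$ for $h_{\pi_0}(f)$ in the upper bounds of Theorems \ref{thm_e2_ub2}/\ref{thm_e3_ub}, and the Ledrappier--Young characterization (Theorem \ref{thm_ledrappier_young}) to identify $h_{\pi_0}(f)$ with $\int \pi_0(\rmd x)\lambda^+(x)$ in the lower bounds of Theorems \ref{thm_e2_lb}/\ref{thm_e3_lb}. The only difference is that the paper's Theorem \ref{thm_ledrappier_young} is already stated for a not-necessarily-ergodic invariant measure, so the ergodic-decomposition step you propose for the non-ergodic SRB case is not needed there.
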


\begin{proof}
The first statement follows from the Margulis-Ruelle Inequality for the metric entropy of a $C^1$-diffeomorphism (Theorem \ref{thm_ruelle_pesin} in the appendix). The second statement follows from the characterization of SRB measures for $C^2$-diffeomorphisms in terms of metric entropy (Theorem \ref{thm_ledrappier_young} in the appendix).%
\end{proof}

In the following, we demonstrate the contents of our results by applying them to some well-studied dynamical systems with chaotic behavior (and thus positive entropy).%

\begin{example}
Consider the map%
\begin{equation*}
  f(\theta,x,y) = (2\theta,x/4 + \cos(2\pi\theta)/2,y/4 + \sin(2\pi\theta)/2),%
\end{equation*}
regarded as a map on $X := S^1 \tm D^2$, where $S^1$ is the unit circle and $D^2$ the unit disk, respectively. It is known that $f$ has an Axiom A attractor $\Lambda$, which is called the \emph{solenoid attractor}. In this case, there exists a unique SRB measure $\pi_0$ supported on $\Lambda$. It is known that $f$ is topologically transitive on $\Lambda$, implying that $\pi_0$ is ergodic (cf.~\cite[Sec.~7.7]{Rob}). The metric entropy $h_{\pi_0}(f)$ is known to be $\log 2$, cf.~\cite[Ex.~6.3]{Fro}. Hence, in this case%
\begin{equation*}
  C_0 = \log 2 = 1,%
\end{equation*}
i.e., the channel must support a transmission of at least one bit at each time instant in order to make the state estimation objectives (E2) and (E3) achievable.%
\end{example}

\begin{example}
We consider the map%
\begin{equation*}
  f(x,y) = (5 - 0.3y - x^2,x),\quad f:\R^2 \rightarrow \R^2.%
\end{equation*}
The nonwandering set of $f$, i.e., the set of all points $(x,y)$ so that for every neighborhood $U$ of $(x,y)$ there is $n\geq 1$ with $f^n(U) \cap U \neq \emptyset$, can be shown to be uniformly hyperbolic, but it is not an attractor. Hence, there exists no SRB measure on this set. However, we can take $\pi_0$ to be the unique equilibrium state of the potential $\varphi(x) := -\log|\rmD f(x)_{|E^u_x}|$ ($E^u_x$ being the unstable subspace at $x$), i.e., the unique probability measure $\pi_0$ satisfying%
\begin{equation*}
  P_{\tp}(f;\varphi) = h_{\pi_0}(f) + \int \varphi \rmd \pi_0,%
\end{equation*}
where $P_{\tp}(\cdot)$ is the topological pressure.%

The restriction of $f$ to its nonwandering set is an Axiom A system. A numerical approximation of the metric entropy $h_{\pi_0}(f)$ is given in \cite[Ex.~6.4]{Fro}, namely%
\begin{equation*}
  h_{\pi_0}(f) \approx 0.655.%
\end{equation*}
\end{example}

\begin{example}
Consider a volume-preserving $C^2$-Anosov diffeomorphism $f:\T^n \rightarrow \T^n$ of the $n$-dimensional torus. It is known that the volume measure $m$ is ergodic for such $f$. In this case,%
\begin{align*}
  h_m(f) = \int \log J^u f \rmd m,\quad &h_{\tp}(f) = h_{\tp}(f;\supp m) \\
&\quad = \lim_{n\rightarrow\infty}\frac{1}{n}\log \int J^u f^n \rmd m,%
\end{align*}
where $J^u f$ stands for the unstable determinant of $\rmD f$, i.e., $J^uf(x) = |\det\rmD f(x)_{|E^u_x}|$ with $E^u_x$ being the unstable subspace at $x$. Observe that, in general, $h_m(f) \neq h_{\tp}(f)$. In fact, by \cite[Cor.~20.4.5]{KHa}, in the case $n=2$ the equality $h_m(f) = h_{\tp}(f)$ implies that $f$ is $\CC^1$-conjugate to a linear automorphism $g:\T^2 \rightarrow \T^2$. Since $\CC^1$-conjugacy implies that the eigenvalues of the corresponding linearizations along any periodic orbits coincide, the existence of a $\CC^1$-conjugacy is a very strict condition. Hence, most area-preserving Anosov diffeomorphisms on $\T^2$ satisfy the strict inequality $h_m(f) < h_{\tp}(f)$.%
\end{example}

\begin{example}
An example for an area-preserving Anosov diffeomorphism on $\T^2 = \R^2/\Z^2$ is Arnold's Cat Map, which is the group automorphism induced by the linear map on $\R^2$ associated with the integer matrix%
\begin{equation*}
  A = \left[\begin{array}{cc} 2 & 1 \\ 1 & 1 \end{array}\right].%
\end{equation*}
The fact that this matrix has determinant $1$ implies that the induced map $f_A:\T^2 \rightarrow \T^2$, $x+\Z^2 \mapsto Ax + \Z^2$, is area-preserving and invertible. The Anosov property follows from the fact that $A$ has two distinct real eigenvalues $\lambda_1$ and $\lambda_2$ with $|\lambda_1| > 1$ and $|\lambda_2| < 1$, namely%
\begin{equation*}
  \lambda_1 = -\frac{3}{2} - \frac{1}{2}\sqrt{5} \mbox{\quad and\quad} \lambda_2 = -\frac{3}{2} + \frac{1}{2}\sqrt{5}.%
\end{equation*}
At each point in $\T^2$, the Lyapunov exponents of $f_A$ exist and their values are $\log|\lambda_1|$ and $\log|\lambda_2|$. Pesin's entropy formula (see Theorem \ref{thm_ruelle_pesin} in the appendix) thus implies%
\begin{equation*}
  h_m(f_A) = \log|\lambda_1| = \log\bigg|\frac{3}{2} + \frac{1}{2}\sqrt{5}\bigg|.%
\end{equation*}
The formula for the topological entropy given in the preceding example shows that $h_{\tp}(f_A) = h_m(f_A)$.%
\end{example}

Finally, we present an example taken from \cite{VZh} in which the assumptions of Theorem \ref{thm_e2_ub2} are satisfied.% 

\begin{example}
Let $f:\rmS^1 \rightarrow \rmS^1$ be an expanding circle map of degree $\geq 3$. (Observe that we did not use invertibility of $f$ in Theorem \ref{thm_e2_ub2}.) Write $\rmS^1 = [0,1]/\!\!\sim$, where the equivalence relation $\sim$ identifies the extremal points of the interval. Assume that there exists an open interval $I \subset (0,1)$ so that $f_{|I}$ is injective and $f(I) = (0,1)$. Then the set%
\begin{equation*}
  \Lambda := \bigcap_{n \geq 0}f^{-n}(\rmS^1 \backslash I)%
\end{equation*}
is an $f$-invariant Cantor set and $f_{|\Lambda}$ is expanding. By \cite[Ex.~5.2]{VZh}, we find ergodic measures $\mu$ of $f_{|\Lambda}$ and partitions $\PC$ of $\Lambda$ (of arbitrarily small diameter) such that there exists $\delta_*>0$ with%
\begin{equation*}
  \limsup_{n\rightarrow\infty}\frac{1}{n} \log\mu\bigl(\bigl\{ x\in\Lambda : \bigl|-\frac{1}{n}\log \mu(\PC^n(x)) - h\bigr| \geq \delta\bigr\}\bigr) < 0%
\end{equation*}
for all $\delta \in (0,\delta_*)$, where $h = h_{\mu}(f_{|\Lambda}) = h_{\mu}(f_{|\Lambda};\PC)$. For those measures, Theorem \ref{thm_e2_ub2} yields%
\begin{equation*}
  C_0 \leq \log(1 + \lfloor 2^{h_{\mu}(f_{|\Lambda})} \rfloor),%
\end{equation*}
where $C_0$ is the smallest channel capacity above which (E2) can be achieved for every $\ep>0$. The measures that satisfy the above are obtained from so-called Gibbs measures of the one-sided full shift on two symbols, which is topologically conjugate to $f_{|\Lambda}$.%
\end{example}

\section{Appendix}\label{sec_appendix}

In this appendix, we review some classial results of ergodic theory that are used in the paper. We start with the Shannon-McMillan-Breiman Theorem, cf.~\cite{GrayIT}.%

\begin{theorem}\label{thm_smb}
Let $(X,\FC,\mu,f)$ be an ergodic measure-preserving dynamical system, i.e., $(X,\FC,\mu)$ is a probability space and $\mu$ is ergodic w.r.t.~the map $f:X\rightarrow X$. Let $\PC$ be a finite measurable partition of $X$. Then, for almost every $x\in X$, as $n\rightarrow\infty$,%
\begin{equation*}
  -\frac{1}{n}\log\mu(\PC^n(x)) \rightarrow h_{\mu}(f;\PC),%
\end{equation*}
where $\PC^n(x)$ is the element of $\PC^n$ containing $x$.%
\end{theorem}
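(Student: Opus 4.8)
The final statement is the classical Shannon--McMillan--Breiman theorem, and the plan is to follow Breiman's information-function route. Since $\PC$ is finite, $H_{\mu}(\PC)\leq\log|\PC|<\infty$. For $m\geq1$ set $\FC_m:=\bigvee_{i=1}^{m}f^{-i}\PC$ and let $I_m(x):=-\log\mu(\PC(x)\mid\FC_m)(x)$ be the associated conditional information function, with $I_0(x):=-\log\mu(\PC(x))$, where $\PC(x)$ denotes the atom of $\PC$ containing $x$. From $\PC^n(x)=\PC(x)\cap f^{-1}(\PC^{n-1}(f(x)))$ and the $f$-invariance of $\mu$ one gets the telescoping identity $-\log\mu(\PC^n(x))=\sum_{k=0}^{n-1}I_{n-1-k}(f^k(x))$, which reduces the theorem to a Birkhoff-type statement for the (non-constant) family of functions $I_{n-1-k}$ evaluated along the orbit of $x$.

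I would then invoke two classical facts. First, applying L\'evy's upward martingale convergence theorem to the increasing filtration $(\FC_m)_{m\geq1}$ gives $I_m\to g$ $\mu$-almost everywhere, where $g(x):=-\log\mu(\PC(x)\mid\bigvee_{i\geq1}f^{-i}\PC)(x)$. Second, the maximal inequality for the bounded martingales $m\mapsto\mu(A\mid\FC_m)$, $A\in\PC$ (Chung's lemma), yields $g^*:=\sup_{m\geq0}I_m\in L^1(\mu)$; consequently $g\in L^1(\mu)$ and $\tilde g:=g^*+g\in L^1(\mu)$ dominates every $|I_m-g|$.

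The heart of the proof is Breiman's argument that $\frac1n\sum_{k=0}^{n-1}I_{n-1-k}(f^k(x))\to\int g\,\rmd\mu$ for $\mu$-a.e.\ $x$. Writing this average as $\frac1n\sum_{k=0}^{n-1}g(f^k(x))+\frac1n\sum_{k=0}^{n-1}(I_{n-1-k}-g)(f^k(x))$, the first term tends to $\int g\,\rmd\mu$ by Birkhoff's ergodic theorem and ergodicity of $\mu$. For the second term, fix $\ep>0$; since $G_N:=\sup_{m\geq N}|I_m-g|$ satisfies $G_N\downarrow0$ $\mu$-a.e.\ and $0\leq G_N\leq\tilde g$, dominated convergence gives $N$ with $\int G_N\,\rmd\mu<\ep$. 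Split the sum at $k=n-N$: for $0\leq k\leq n-1-N$ we have $n-1-k\geq N$, hence $|I_{n-1-k}-g|\leq G_N$, so this block is at most $\frac1n\sum_{k=0}^{n-1}G_N(f^k(x))\to\int G_N\,\rmd\mu$ by Birkhoff; the remaining $N$ terms are at most $\frac1n\sum_{k=n-N}^{n-1}\tilde g(f^k(x))$, which is the difference of $\frac1n\sum_{k=0}^{n-1}\tilde g(f^k(x))$ and $\frac{n-N}{n}\cdot\frac1{n-N}\sum_{k=0}^{n-N-1}\tilde g(f^k(x))$, both tending to $\int\tilde g\,\rmd\mu$, hence this block tends to $0$. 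Therefore $\limsup_n\frac1n\sum_{k=0}^{n-1}|I_{n-1-k}-g|(f^k(x))\leq\ep$ outside a $\mu$-null set; intersecting the countably many full-measure sets obtained for $\ep=1/j$, $j\in\N$, and letting $\ep\downarrow0$ proves the claim.

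It remains to identify the limit with $h_{\mu}(f;\PC)$. By the chain rule for entropy together with $f$-invariance, $H_{\mu}(\PC^n)=\sum_{k=0}^{n-1}H_{\mu}(\PC\mid\FC_k)$ (reading $H_{\mu}(\PC\mid\FC_0):=H_{\mu}(\PC)$), and $H_{\mu}(\PC\mid\FC_k)=\int I_k\,\rmd\mu$ is nonincreasing in $k$ with limit $\int g\,\rmd\mu$ by dominated convergence; Ces\`aro summation then gives $h_{\mu}(f;\PC)=\lim_n\frac1n H_{\mu}(\PC^n)=\int g\,\rmd\mu$. Combined with the previous paragraph this yields $-\frac1n\log\mu(\PC^n(x))\to h_{\mu}(f;\PC)$ for $\mu$-a.e.\ $x$. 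The main obstacle is the treatment of the moving averages $\frac1n\sum_k I_{n-1-k}(f^k(x))$: this is exactly where one needs the uniform $L^1$-domination $g^*\in L^1(\mu)$ and the two-block splitting, whereas Birkhoff's theorem and the martingale convergence theorem enter only as standard black boxes.
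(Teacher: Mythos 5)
The paper does not prove this theorem: it is stated in the appendix as a classical result (the Shannon--McMillan--Breiman theorem) with a citation to Gray's book \cite{GrayIT}, so there is no proof of the paper's own to compare against. Your proposal is a correct, self-contained reproduction of Breiman's information-function proof. The telescoping identity $-\log\mu(\PC^n(x))=\sum_{k=0}^{n-1}I_{n-1-k}(f^k(x))$ follows exactly as you indicate from $\PC^n(x)=\PC(x)\cap f^{-1}\bigl(\PC^{n-1}(f(x))\bigr)$, $f$-invariance, and the fact that for countably generated $\FC_m$ one has $\mu(\PC(x)\mid\FC_m)(x)=\mu(\PC(x)\cap\FC_m(x))/\mu(\FC_m(x))$. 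The maximal inequality (Chung's lemma) giving $g^*=\sup_m I_m\in L^1$, L\'evy's upward theorem giving $I_m\to g$ a.e., and the two-block splitting with $G_N:=\sup_{m\geq N}|I_m-g|$ dominated by $g^*+g$ constitute precisely the standard mechanism for handling the moving index $I_{n-1-k}$; your bound on the tail block $\frac{1}{n}\sum_{k=n-N}^{n-1}\tilde g(f^k(x))$ as a vanishing difference of Birkhoff averages is fine. Finally, the chain rule $H_\mu(\PC^n)=\sum_{k=0}^{n-1}H_\mu(\PC\mid\FC_k)$ with $H_\mu(\PC\mid\FC_k)=\int I_k\,\rmd\mu$ nonincreasing, together with dominated convergence and Ces\`aro averaging, correctly identifies the limit as $\int g\,\rmd\mu=h_\mu(f;\PC)$. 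No gaps.
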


If $\PC = \{P_1,\ldots,P_r\}$, for $n\in\N$ and $x\in X$, let $i^n(x) = (i^n_0(x),\ldots,i^n_{n-1}(x))$ be the element of $\{1,\ldots,r\}^n$ with $f^j(x) \in P_{i^n_j(x)}$ for $0 \leq j < n$. We call $i^n(x)$ the \emph{length-$n$ itinerary of $x$ w.r.t.~$\PC$}. Putting $h := h_{\mu}(f;\PC)$, we also define for each $\ep>0$ and $n\in\N$ the set%
\begin{align*}
 & \Sigma_{n,\ep} := \bigl\{ a \in \{1,\ldots,r\}^n : 2^{-n(h+\ep)} \leq \\
 & \quad \quad \quad \quad \mu(\{x:i^n(x)=a\}) \leq 2^{-n(h-\ep)} \bigr\}.%
\end{align*}

As an immediate corollary of the Shannon-McMillan-Breiman Theorem we obtain the following.%

\begin{corollary}\label{cor_smb}
Under the assumptions of Theorem \ref{thm_smb}, it holds that%
\begin{equation}\label{eq_tsmeas}
  \mu(\{x\in X : \exists m\in\N \mbox{ with } i^n(x) \in \Sigma_{n,\ep},\ \forall n \geq m\}) = 1%
\end{equation}
and%
\begin{equation}\label{eq_tscard}
  |\Sigma_{n,\ep}| \leq 2^{n(h+\ep)}.%
\end{equation}
\end{corollary}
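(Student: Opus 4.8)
The plan is to obtain both assertions by unwinding the definitions and invoking Theorem \ref{thm_smb} directly; no additional machinery is needed. The starting observation is that the cylinder set $\{y\in X : i^n(y) = a\}$ is exactly one member of the refined partition $\PC^n$ (possibly empty), and in particular $\mu(\PC^n(x)) = \mu(\{y\in X : i^n(y) = i^n(x)\})$. Consequently, the membership $i^n(x)\in\Sigma_{n,\ep}$ is equivalent to the two-sided estimate
\begin{equation*}
  \Bigl| -\frac{1}{n}\log\mu(\PC^n(x)) - h \Bigr| \leq \ep .
\end{equation*}

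To prove \eqref{eq_tsmeas}, I would let $N_0$ denote the set of those $x\in X$ for which $-\frac{1}{n}\log\mu(\PC^n(x))$ fails to converge to $h=h_{\mu}(f;\PC)$; by Theorem \ref{thm_smb} we have $\mu(N_0)=0$. For every $x\in X\setminus N_0$, convergence furnishes an index $m=m(x)\in\N$ such that $|-\frac{1}{n}\log\mu(\PC^n(x)) - h| < \ep$, hence $i^n(x)\in\Sigma_{n,\ep}$, for all $n\geq m$. Therefore the set appearing in \eqref{eq_tsmeas} contains $X\setminus N_0$ and so has full measure.

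To prove \eqref{eq_tscard}, I would use that the cylinder sets $\{x\in X : i^n(x)=a\}$, as $a$ ranges over $\{1,\ldots,r\}^n$, are pairwise disjoint, so that $\sum_{a}\mu(\{x : i^n(x)=a\}) \leq 1$. Restricting the sum to $a\in\Sigma_{n,\ep}$ and using the lower bound $\mu(\{x : i^n(x)=a\}) \geq 2^{-n(h+\ep)}$ valid for such $a$, we get $|\Sigma_{n,\ep}|\,2^{-n(h+\ep)} \leq 1$, which rearranges to the claimed bound.

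Since each step is a direct consequence of the Shannon--McMillan--Breiman theorem, I do not expect a genuine obstacle here; the only mildly delicate point is being precise about the identification of $\mu(\PC^n(x))$ with the measure of the itinerary cylinder of $x$ and about the rearrangement of the pointwise limit into the inequalities defining $\Sigma_{n,\ep}$.
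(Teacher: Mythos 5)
Your proposal is correct and follows essentially the same route as the paper: both parts hinge on the identification $\mu(\PC^n(x)) = \mu(\{y : i^n(y) = i^n(x)\})$, with \eqref{eq_tsmeas} read off from the almost-everywhere convergence in Theorem \ref{thm_smb} and \eqref{eq_tscard} obtained by summing the lower bound $2^{-n(h+\ep)}$ over the disjoint cylinders indexed by $\Sigma_{n,\ep}$. You merely spell out the convergence-to-membership step that the paper leaves implicit; there is no substantive difference.
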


\begin{proof}
The identity \eqref{eq_tsmeas} follows from the observation that $\mu(\PC^n(x)) = \mu(\{y\in X : i^n(y) = i^n(x)\})$. From the estimates%
\begin{align*}
&  1 \geq \mu(\{x : i^n(x) \in \Sigma_{n,\ep}\}) \\
&  = \sum_{a\in\Sigma_{n,\ep}}\mu(\{x:i^n(x)=a\}) \geq |\Sigma_{n,\ep}|\cdot 2^{-n(h+\ep)}%
\end{align*}
the inequality \eqref{eq_tscard} immediately follows.%
\end{proof}

In the ergodic theory of smooth dynamical systems, the most fundamental result is Oseledec's Theorem, also known as the Multiplicative Ergodic Theorem, see \cite{Arn}.%

\begin{theorem}\label{thm_met}
Let $(\Omega,\FC,\mu)$ be a probability space and $\theta:\Omega \rightarrow \Omega$ a measurable map, preserving $\mu$. Let $T:\Omega \rightarrow \R^{d\tm d}$ be a measurable map such that $\max\{0,\log\|T(\cdot)\|\} \in L^1(\Omega,\mu)$ and write%
\begin{equation*}
  T_x^n := T(\theta^{n-1}(x)) \cdots T(\theta(x))T(x).%
\end{equation*}
Then there is $\tilde{\Omega} \subset \Omega$ with $\mu(\tilde{\Omega}) = 1$ so that for all $x\in\tilde{\Omega}$ the following holds:%
\begin{equation*}
  \lim_{n\rightarrow\infty}\left[(T_x^n)^*(T_x^n)\right]^{1/(2n)} =: \Lambda_x%
\end{equation*}
exists and, moreover, if $\exp\lambda_x^{(1)} < \cdots < \exp\lambda_x^{s(x)}$ denote the eigenvalues of $\Lambda_x$ and $U_x^{(1)},\ldots,U_x^{s(x)}$ the associated eigenspaces, then%
\begin{equation*}
  \lim_{n\rightarrow\infty}\frac{1}{n}\log\|T_x^nv\| = \lambda_x^{(r)} \mbox{ if } v \in V_x^{(r)}\backslash V_x^{(r-1)},%
\end{equation*}
where $V_x^{(r)} = U_x^{(1)} + \cdots + U_x^{(r)}$ and $r = 1,\ldots,s(x)$.%
\end{theorem}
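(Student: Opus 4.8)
The plan is to derive everything from Kingman's subadditive ergodic theorem, applied first to the matrix cocycle $(x,n)\mapsto T_x^n$ and then to its exterior powers; this produces the Lyapunov exponents, i.e.\ the eigenvalues of $\Lambda_x$, and the genuinely hard part is then to upgrade this to convergence of the full symmetric matrices $[(T_x^n)^*(T_x^n)]^{1/(2n)}$, not merely of their spectra. First I would record the cocycle identity $T_x^{m+n}=T_{\theta^m x}^n\,T_x^m$, which gives the submultiplicativity $\log\|T_x^{m+n}\|\le\log\|T_x^m\|+\log\|T_{\theta^m x}^n\|$; since $\max\{0,\log\|T(\cdot)\|\}\in L^1(\Omega,\mu)$, Kingman's theorem applies to $g_n(x):=\log\|T_x^n\|$ and yields a $\theta$-invariant limit $\lambda^{[1]}(x)=\lim_n\frac1n g_n(x)$ for a.e.\ $x$ (the Furstenberg--Kesten statement). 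The integrability hypothesis of the theorem is exactly what is needed here, and the possibility of $-\infty$ values for some $\lambda^{[k]}$, corresponding to a singular $\Lambda_x$, is permitted throughout.

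The same argument applies verbatim to each exterior power: $(x,n)\mapsto\wedge^k T_x^n$ is again a cocycle (because $\wedge^k(AB)=(\wedge^k A)(\wedge^k B)$), the positive part of $\log\|\wedge^k T_x^n\|$ is still dominated by $k\sum_j\max\{0,\log\|T(\theta^j x)\|\}$, and $\|\wedge^k T_x^n\|$ equals the product $\sigma_1^{(n)}(x)\cdots\sigma_k^{(n)}(x)$ of the $k$ largest singular values of $T_x^n$. Hence Kingman gives $\theta$-invariant limits $\gamma_k(x)=\lim_n\frac1n\log\|\wedge^k T_x^n\|$, whence $\frac1n\log\sigma_k^{(n)}(x)\to\gamma_k(x)-\gamma_{k-1}(x)=:\lambda^{[k]}(x)$, a non-increasing sequence in $k$ (because $\sigma_k^{(n)}\ge\sigma_{k+1}^{(n)}$). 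Its distinct values, with multiplicities equal to the sizes of the constancy blocks, are precisely the numbers $\lambda_x^{(1)}<\cdots<\lambda_x^{(s(x))}$ of the theorem; at this point the eigenvalues of $[(T_x^n)^*(T_x^n)]^{1/(2n)}$, namely $(\sigma_k^{(n)}(x))^{1/n}$, already converge to $\exp\lambda_x^{(r)}$ with the correct multiplicities.

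The substantial step is the convergence of the \emph{eigenspaces}, equivalently of the spectral projections attached to the blocks of equal singular-value growth; this is the technical core of the theorem, and I would follow Raghunathan's argument (see also \cite{Arn}). Fix a gap $\lambda_x^{(r)}<\lambda_x^{(r+1)}$ and let $W_r^{(n)}\subset\R^d$ be the span of the singular directions of $T_x^n$ belonging to blocks $1,\dots,r$. Comparing $T_x^{n+1}=T(\theta^n x)\,T_x^n$ with $T_x^n$ and using that $\frac1n\max\{0,\log\|T(\theta^n x)\|\}\to0$ a.e.\ (a consequence of Birkhoff's theorem applied to the positive part of $\log\|T\|$), one obtains a bound on the angle between $W_r^{(n)}$ and $W_r^{(n+1)}$ that is exponentially small with rate governed by the gap $\lambda_x^{(r+1)}-\lambda_x^{(r)}$. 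Thus $(W_r^{(n)})_n$ is Cauchy in the Grassmannian and converges to a subspace $V_x^{(r)}$; carrying this through for all $r$, the matrices $[(T_x^n)^*(T_x^n)]^{1/(2n)}$ converge to a positive semidefinite symmetric $\Lambda_x$ with eigenvalues $\exp\lambda_x^{(r)}$, eigenspaces $U_x^{(r)}$, and $V_x^{(r)}=U_x^{(1)}+\cdots+U_x^{(r)}$.

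Finally, the growth statement is a short deduction once $B_n:=[(T_x^n)^*(T_x^n)]^{1/(2n)}\to\Lambda_x$ is known: for $v\in V_x^{(r)}\setminus V_x^{(r-1)}$ one has $\|T_x^n v\|=\|B_n^n v\|$, and decomposing $v$ along the eigenspaces of $\Lambda_x$ (so $v$ has a nonzero component in $U_x^{(r)}$ and none beyond block $r$) and using the spectral projections of $B_n$, which converge to those of $\Lambda_x$, one squeezes $\frac1n\log\|B_n^n v\|$ between quantities tending to $\lambda_x^{(r)}$. I expect this last step and the passage from Kingman to the exponents to be routine; the real obstacle is the eigenspace-convergence step, i.e.\ showing that the symmetric matrices themselves (not just their spectra) converge, which is where the spectral-gap estimate and the Cauchy argument in the Grassmannian are essential.
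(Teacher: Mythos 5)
The paper does not prove this result: it is stated in the appendix as the classical Multiplicative Ergodic Theorem and attributed to Arnold's book \cite{Arn} without argument, so there is no in-paper proof to compare against. Your sketch nonetheless correctly outlines the standard Raghunathan-style proof that Arnold presents: Kingman's subadditive ergodic theorem applied to $\log\|T_x^n\|$ and to the exterior powers $\log\|\wedge^k T_x^n\|$ produces the Lyapunov exponents as differences of the block quantities, and the convergence of $[(T_x^n)^*(T_x^n)]^{1/(2n)}$ is established by showing that the filtration of singular subspaces is Cauchy in the Grassmannian, with spectral-gap-driven exponential rates.

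One point deserves more care than you give it, and as phrased it is a genuine gap: the final growth statement is not ``a short deduction once $B_n\to\Lambda_x$ is known.'' Mere convergence of the matrices, hence qualitative convergence of their spectral projections, is not sufficient. Take $v\in V_x^{(r)}\setminus V_x^{(r-1)}$ and decompose along the eigenspaces of $B_n$: the contribution from a block $j>r$ is $\exp(2n\lambda_x^{(j)}+o(n))\,\|P_n^{(j)}v\|^2$, and since $\lambda_x^{(j)}>\lambda_x^{(r)}$ this term would dominate unless $\|P_n^{(j)}v\|$ decays exponentially at a rate exceeding $\lambda_x^{(j)}-\lambda_x^{(r)}$. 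If, say, $\|P_n^{(j)}v\|\sim 1/n$, the squeeze fails and $\frac1n\log\|B_n^nv\|$ would converge to $\lambda_x^{(j)}$, not $\lambda_x^{(r)}$. The required exponential decay is exactly what your Grassmannian Cauchy estimate produces, but you must carry that quantitative rate into the last step rather than fall back on the bare statement $B_n\to\Lambda_x$; otherwise the upper bound in the squeeze does not close.
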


The numbers $\lambda_x^{(i)}$ are called \emph{($\mu$-)Lyapunov exponents}. We apply the theorem to the situation, when $\theta$ is a diffeomorphism on a compact Riemannian manifold and $T(x)$ is the derivative of $\theta$ at $x$. In this situation, the Lyapunov exponents measure the rates at which nearby orbits diverge (or converge). Despite the fact that this is a purely geometric concept, the Lyapunov exponents are closely related to the metric entropy. Fundamental results here are the Margulis-Ruelle inequality \cite{Rue} and Pesin's entropy formula \cite{Pes}, which we summarize in the following theorem.%

\begin{theorem}\label{thm_ruelle_pesin}
Let $M$ be a compact Riemannian manifold and $f:M\rightarrow M$ a $C^1$-diffeomorphism preserving a Borel probability measure $\mu$. Then%
\begin{equation*}
  h_{\mu}(f) \leq \int \mu(\rmd x)\sum_i\max\{0,\dim U_x^{(i)} \cdot \lambda_x^{(i)}\}%
\end{equation*}
with summation over all Lyapunov exponents at $x$. If $f$ is a $C^2$-diffeomorphism and $\mu$ is absolutely continuous w.r.t.~Riemannian volume, then equality holds. If $\mu$ is ergodic, then the Lyapunov exponents are constant almost everywhere, and hence the integration can be omitted.%
\end{theorem}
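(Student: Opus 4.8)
The plan is to split Theorem~\ref{thm_ruelle_pesin} into three parts: (i)~the Margulis--Ruelle inequality $h_{\mu}(f) \leq \int \mu(\rmd x)\,\chi^+(x)$, valid for any $C^1$-diffeomorphism, where I write $\chi^+(x) := \sum_i \max\{0,\dim U_x^{(i)}\cdot\lambda_x^{(i)}\}$; (ii)~the reverse inequality (Pesin's entropy formula) under the stronger hypotheses $f\in C^2$ and $\mu\ll\mathrm{vol}$; and (iii)~the reduction to constant exponents in the ergodic case. Part (iii) is immediate from Oseledec's Theorem (Theorem~\ref{thm_met} applied with $\theta=f$ and $T(x)=\rmD f(x)$, so that $T_x^n=\rmD f^n(x)$): the Oseledec splitting $\{U_x^{(i)}\}$ and the exponents $\lambda_x^{(i)}$ are $\rmD f$-equivariant, hence $x\mapsto s(x)$, $x\mapsto\dim U_x^{(i)}$ and $x\mapsto\lambda_x^{(i)}$ are $f$-invariant measurable functions, thus $\mu$-a.e.\ constant when $\mu$ is ergodic; the integral in the statement then collapses to $\sum_i\max\{0,\dim U^{(i)}\lambda^{(i)}\}$. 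For the non-ergodic case of (i) one first passes to the ergodic decomposition $\mu=\int\mu_\omega\,\rmd\omega$, using that $\mu\mapsto h_{\mu}(f)$ is affine (Jacobs' theorem) and $\mu\mapsto\int\chi^+\,\rmd\mu$ is trivially affine; so it suffices to prove (i) for ergodic $\mu$, in which case $\chi^+$ is the constant $\sum_i\max\{0,d_i\lambda^{(i)}\}$, $d_i:=\dim U^{(i)}$.

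For (i) I would use a covering estimate together with the standard device of passing to high iterates. Fix $m\in\N$. By the Kolmogorov--Sinai theorem, $h_{\mu}(f^m)$ is the supremum of $h_{\mu}(f^m;\QC)$ over finite Borel partitions $\QC$ of arbitrarily small diameter, and $h_{\mu}(f^m;\QC)\leq H_{\mu}(f^{-m}\QC\mid\QC)$ for every $\QC$; so it is enough to bound $H_{\mu}(f^{-m}\QC\mid\QC)$ for all $\QC$ of sufficiently small diameter. Since $M$ is compact and $f$ is $C^1$, for the \emph{fixed} $m$ there is $\delta=\delta(m)>0$ such that whenever $\QC$ has diameter $<\delta$, each atom lies in a single chart and $\rmD f^m$ varies only slightly over each atom. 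Then $H_{\mu}(f^{-m}\QC\mid\QC)\leq\int\mu(\rmd x)\log N_m(x)$, where $N_m(x)$ is the number of atoms of $\QC$ meeting $f^m(\QC(x))$. As $f^m$ restricted to $\QC(x)\subset B(x,\delta)$ is $C^1$-close to the affine map with linear part $\rmD f^m(x)$, the set $f^m(\QC(x))$ lies in an ellipsoidal region with semi-axes of order $\delta\,\sigma_j(\rmD f^m(x))$ (the singular values of $\rmD f^m(x)$), so $N_m(x)\leq K_d\prod_j\max\{1,\sigma_j(\rmD f^m(x))\}$ with $K_d$ depending only on $\dim M$. Taking logarithms and dividing by $m$: $h_{\mu}(f)\leq\frac{1}{m}\log K_d+\frac{1}{m}\int\mu(\rmd x)\sum_j\log^+\sigma_j(\rmD f^m(x))$. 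Now Oseledec's Theorem gives $\frac{1}{m}\log\sigma_j(\rmD f^m(x))\to\mu_j(x)$, the $j$-th Lyapunov exponent counted with multiplicity, hence $\frac{1}{m}\sum_j\log^+\sigma_j(\rmD f^m(x))\to\chi^+(x)$ for a.e.\ $x$, these functions being uniformly bounded by $\dim M\cdot\log^+\max_M\|\rmD f\|$; the dominated convergence theorem then yields $h_{\mu}(f)\leq\int\chi^+\,\rmd\mu$ upon letting $m\to\infty$.

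For (ii) the inequality ``$\leq$'' is exactly part (i), so the content is the reverse inequality, and here I would invoke Pesin theory rather than reprove it. The ingredients: since $f\in C^2$ (in fact $C^{1+\alpha}$ suffices), for $\mu$-a.e.\ $x$ there is a local unstable manifold $W^u_{\mathrm{loc}}(x)$ tangent to $E^u_x:=\bigoplus_{\lambda_x^{(i)}>0}U_x^{(i)}$ on which $f^{-1}$ contracts; one then constructs a measurable partition $\xi$ subordinate to the unstable foliation in the sense of Ledrappier--Strelcyn, increasing in the sense that $f^{-1}\xi$ refines $\xi$, for which the Rokhlin theory of conditional entropy of increasing partitions gives $h_{\mu}(f)\geq H_{\mu}(f^{-1}\xi\mid\xi)=H_{\mu}(\xi\mid f\xi)$; finally, the \emph{absolute continuity of the unstable foliation} --- precisely the point where $C^{1+\alpha}$ regularity is indispensable and $C^1$ is known to fail --- combined with $\mu\ll\mathrm{vol}$ forces the conditional measures of $\mu$ on unstable leaves to be equivalent to the induced Riemannian volume, whence a direct computation of $H_{\mu}(\xi\mid f\xi)$ through these densities, using that $f$ expands unstable volume by the unstable Jacobian $J^u f(x)=|\det\rmD f(x)_{|E^u_x}|$, gives $H_{\mu}(\xi\mid f\xi)=\int\mu(\rmd x)\log J^u f(x)$. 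Since $\log|\det\rmD f^n(x)_{|E^u_x}|=\sum_{k=0}^{n-1}\log J^u f(f^k(x))$, Birkhoff's and Oseledec's theorems identify $\int\log J^u f\,\rmd\mu$ with $\int\mu(\rmd x)\sum_i\max\{0,\dim U_x^{(i)}\lambda_x^{(i)}\}$, so that $\int\chi^+\,\rmd\mu=H_{\mu}(\xi\mid f\xi)\leq h_{\mu}(f)\leq\int\chi^+\,\rmd\mu$ and equality holds throughout. The main obstacle is clearly this last part: setting up Pesin's nonuniformly hyperbolic stable/unstable manifold theory, establishing the absolute continuity of the invariant foliation, and carrying out the partition computation is the technical heart of the matter, so in the paper I would state it with a reference to \cite{Pes} (and to later treatments, e.g.\ Ledrappier--Young and \cite{KHa}) rather than reproducing it.
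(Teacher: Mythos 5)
The paper does not actually prove Theorem~\ref{thm_ruelle_pesin}: it appears in the appendix under a review of ``classical results of ergodic theory that are used in the paper,'' with the inequality attributed to Ruelle~\cite{Rue}, the equality to Pesin~\cite{Pes}, and the converse characterization to Ledrappier--Young~\cite{LYo}. So the comparison here is between your sketch and the standard literature, not an in-paper argument.

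Your sketch is a faithful outline of those classical proofs. Part~(iii) is exactly right: the Oseledec spectrum and splitting are $\rmD f$-equivariant, hence $f$-invariant, hence $\mu$-a.e.\ constant for ergodic $\mu$. Part~(i) correctly reproduces Ruelle's counting argument: the reduction $m\,h_\mu(f)=h_\mu(f^m)\leq H_\mu(f^{-m}\QC\mid\QC)$, the ellipsoid estimate for the number $N_m(x)$ of atoms met by $f^m(\QC(x))$ in terms of the singular values of $\rmD f^m(x)$, and the passage $m\to\infty$ via Oseledec's Theorem together with dominated convergence (legitimized by the uniform bound $\frac{1}{m}\sum_j\log^+\sigma_j(\rmD f^m(\cdot))\leq\dim M\cdot\log^+\sup_M\|\rmD f\|$) are all correct, and the ergodic-decomposition reduction via affineness of $\mu\mapsto h_\mu(f)$ is a valid (though not strictly necessary) simplification. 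The one technical point you gloss over is that the estimate $N_m(x)\leq K_d\prod_j\max\{1,\sigma_j(\rmD f^m(x))\}$ with $K_d$ depending only on $\dim M$ and not on $m$ or $\delta$ requires the atoms of $\QC$ to have inner radius comparable to $\delta$, not merely diameter $<\delta$; with an arbitrary fine partition, thin slivers could meet the image ellipsoid in unbounded number. This is routinely ensured by taking $\QC$ subordinate to a triangulation or cubical decomposition and is a standard fix, not a flaw in the approach. For part~(ii) you explicitly defer to Pesin theory rather than reproving it; the outline you give (Ledrappier--Strelcyn partitions subordinate to the unstable foliation, absolute continuity of that foliation as the step where $C^{1+\alpha}$ is essential, computation of $H_\mu(\xi\mid f\xi)$ through unstable Jacobians, and the Birkhoff/Oseledec identification $\int\log J^u f\,\rmd\mu=\int\chi^+\,\rmd\mu$) is accurate, and the decision to cite rather than reproduce the nonuniform hyperbolicity machinery is precisely the decision the paper itself makes.
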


If a diffeomorphism does not preserve a measure absolutely continuous with respect to volume, it might still preserve a measure that is well-behaved with respect to volume in the sense that the measure determines the behavior of all trajectories with initial values in a set of positive volume. This is made precise in the definition of an SRB (Sinai-Ruelle-Bowen) measure. We will not give the technical definition here, but just mention the following result of Ledrappier and Young \cite{LYo}, which can in fact be regarded as one possible definition of an SRB measure.%

\begin{theorem}\label{thm_ledrappier_young}
Let $f:M\rightarrow M$ be a $C^2$-diffeomorphism of a compact Riemannian manifold, preserving a Borel probability measure $\mu$. Then Pesin's entropy formula for $h_{\mu}(f)$ holds if and only if $\mu$ is an SRB measure.%
\end{theorem}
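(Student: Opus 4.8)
The plan is to establish the two implications along the architecture going back to Ledrappier and to Ledrappier--Young, with the deep analytic content of Pesin theory taken as a black box. I would first dispose of the degenerate case: if $\mu$ has no positive Lyapunov exponents, then $\int\lambda^+\,\rmd\mu=0$, so Theorem~\ref{thm_ruelle_pesin} already forces $h_\mu(f)=0$, while the unstable manifolds reduce to points and the SRB condition is vacuous, so both sides of the equivalence hold. Assume therefore that $\mu$-a.e.\ point carries a positive exponent. By Pesin theory, $\mu$-a.e.\ point has a well-defined unstable manifold $W^u(x)$ depending measurably on $x$, and---using absolute continuity of the unstable holonomies and the bounded-distortion estimates available for $C^2$ maps---one constructs a measurable partition $\eta$ \emph{subordinate to $W^u$}: $\eta(x)$ is, for a.e.\ $x$, an open neighbourhood of $x$ inside $W^u(x)$; $f^{-1}\eta$ refines $\eta$; $\bigvee_{n\geq0}f^{-n}\eta$ separates points along a.e.\ unstable manifold; and $\bigwedge_{n\geq0}f^n\eta$ is the partition into entire unstable manifolds. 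Write $J^uf(x):=|\det\rmD f(x)_{|E^u_x}|$, so that $\int\log J^uf\,\rmd\mu=\int\lambda^+\,\rmd\mu$, and set $h^u_\mu(f):=H_\mu(f^{-1}\eta\mid\eta)$.

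The next step is to reduce the theorem to a statement about $h^u_\mu$ by invoking the principle that the unstable foliation carries all of the entropy, i.e.\ $h_\mu(f)=h^u_\mu(f)$ for every invariant measure of a $C^{1+\alpha}$ diffeomorphism (this equality is itself part of the Ledrappier--Young analysis). Granting it, the theorem reduces to: $h^u_\mu(f)=\int\log J^uf\,\rmd\mu$ if and only if $\mu$ is SRB. The core is then a disintegration/change-of-variables identity. Let $\{\mu^\eta_x\}$ be the system of conditional measures of $\mu$ on the atoms of $\eta$, each supported on a piece of an unstable manifold, and let $m^u_x$ be the Riemannian volume on $W^u(x)$. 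Since $f^{-1}$ contracts $W^u$ with Jacobian $1/J^uf$, I would rewrite $H_\mu(f^{-1}\eta\mid\eta)$ through the information function of the conditionals and, by a pointwise density (martingale-convergence) argument, obtain that $\int\log J^uf\,\rmd\mu-h^u_\mu(f)$ is the integral of a nonnegative quantity vanishing exactly when each $\mu^\eta_x$ is equivalent to the normalized restriction of $m^u_x$ to $\eta(x)$---i.e.\ exactly when $\mu$ has absolutely continuous conditional measures on unstable manifolds, which is the Ledrappier--Young definition of an SRB measure.

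Of the two directions, ``SRB $\Rightarrow$ Pesin'' is the softer one: absolute continuity of the conditionals lets one run the change of variables directly and read off equality. The direction ``Pesin $\Rightarrow$ SRB'' is where the real work lies, and I expect it to be the main obstacle: one starts from an \emph{integrated} equality and must promote it to a fiberwise Radon--Nikodym statement. The mechanism is a strict Jensen/convexity estimate showing the relevant integrand is nonnegative with equality only at the absolutely continuous extremal, together with the increasing-partition structure of $\eta$ to propagate the equality through the refinements $f^{-n}\eta$ and so pin down the densities. This promotion is precisely the step that consumes the full strength of Pesin theory---Lyapunov charts, absolute continuity of the invariant foliations, distortion control for $C^2$ maps---in the papers of Ledrappier and of Ledrappier--Young. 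Since the present paper uses this theorem only as an ingredient for the corollary in Section~\ref{sec_corex}, in the write-up I would record this architecture and refer to \cite{LYo} for the detailed estimates.
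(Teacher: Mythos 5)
The paper does not prove this result; it is stated as a classical fact and delegated entirely to Ledrappier and Young \cite{LYo}, so there is no in-paper proof to compare against. That said, your sketch is a faithful high-level reconstruction of the Ledrappier--Young argument: the measurable partition subordinate to the unstable lamination, the reduction via the identity $h_\mu(f)=h^u_\mu(f)$ (itself a substantial theorem of theirs, which you rightly flag as being ``granted''), the change-of-variables comparing the conditional measures $\mu^\eta_x$ to leafwise Riemannian volume $m^u_x$ through the unstable Jacobian, and the strict-Jensen mechanism that promotes the integrated Pesin equality to a fiberwise Radon--Nikodym statement in the hard ``Pesin $\Rightarrow$ SRB'' direction. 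You correctly identify where the $C^2$ regularity, bounded distortion, and absolute continuity of the Pesin foliations are consumed, and your decision to defer to \cite{LYo} for the analytic details matches the paper's own treatment exactly.
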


Finally, we present the proof of Lemma \ref{lem_newhouse}.%

\begin{proof}
Throughout the proof, we use the notation $\RC|\tilde{X} = \{R\cap\tilde{X} : R \in \RC\}$ for any partition $\RC$ of $X$. Given the partition $\PC$, choose $\delta$ so that%
\begin{equation*}
  0 < \delta < \frac{1}{2}\min_{1 \leq i < j \leq s}\min_{(x,y) \in P_i \tm P_j}d(x,y).%
\end{equation*}
Given $\rho>0$, choose $\alpha \in (0,1)$ satisfying%
\begin{equation*}
  \alpha < \frac{\rho}{2\log|\PC|} = \frac{\rho}{2\log(s+1)}.%
\end{equation*}
Now let $\tilde{X} \subset X$ be an arbitrary measurable set with $\pi_0(\tilde{X}) \geq 1 - \alpha$ and consider the partition $\QC := \{\tilde{X},X\backslash\tilde{X}\}$. Fix $N\in\N$ with $(\log 2)/N < \rho/2$. Let $E \subset \tilde{X}$ be a maximal $(k,\delta/2)$-separated subset for the map $f^N$. Maximality guarantees that for each $z \in \tilde{X}$ there is $\varphi(z) \in E$ such that $d(f^{jN}(z),f^{jN}(\varphi(z))) \leq \delta/2$ for $0 \leq j < k$. Using elementary properties of conditional entropy, we obtain%
\begin{align*}
&  H_{\pi_0}\Bigl(\bigvee_{i=0}^{k-1}f^{-iN}\PC\Bigr) \\
  &\leq H_{\pi_0}(\QC) + H_{\pi_0}\Bigl(\bigvee_{i=0}^{k-1}f^{-iN}\PC|\QC\Bigr)\\
	&\leq \log 2 + \pi_0(\tilde{X}) \sum_{P \in \bigvee_{i=0}^{k-1}f^{-iN}\PC} - \frac{\pi_0(P \cap \tilde{X})}{\pi_0(\tilde{X})}\log \frac{\pi_0(P \cap \tilde{X})}{\pi_0(\tilde{X})}\\
	&\qquad\qquad + \pi_0(X\backslash\tilde{X})(k\log|\PC|)\\
	&\leq \log 2 + \log\Bigl|\bigvee_{i=0}^{k-1}f^{-iN}\PC|\tilde{X}\Bigr| + \frac{\rho}{2}k.%
\end{align*}
For each $P \in \bigvee_{i=0}^{k-1}f^{-iN}\PC|\tilde{X}$ let $x_P \in P$ and consider $\varphi(x_P) \in E$. Since each ball $B_{\delta}(z)$ intersects at most two elements of $\PC$, we have $|\varphi^{-1}(z)| \leq 2^k$, implying%
\begin{align*}
  \log\bigl|\bigvee_{i=0}^{k-1}f^{-iN}\PC|\tilde{X}\bigr| &\leq k\log 2 + \log|E|\\
	&\leq k\log 2 + \log r_{\sep}(kN,\delta/2;\tilde{X},f),%
\end{align*}
using that $E$ is a $(kN,\delta/2)$-separated (though not necessarily maximal) subset of $\tilde{X}$ for the map $f$. This yields%
\begin{align*}
  H_{\pi_0}\Bigl(\bigvee_{i=0}^{k-1}f^{-iN}\PC\Bigr) &\leq \log 2 + k\log 2 \\
	&\qquad+ \log r_{\sep}(kN,\delta/2;\tilde{X},f) + \frac{\rho}{2}k.%
\end{align*}
Dividing by $kN$ and letting $k\rightarrow\infty$ leads to $(1/N)h_{\pi_0}(f^N;\PC) \leq h(f;\tilde{X},\delta/2) + \rho$, completing the proof.%
\end{proof}

\section*{Acknowledgements}%

We would like to express our gratitude to Anthony Quas who gave us a very helpful hint leading to Theorem \ref{thm_e2_ub2}, to Katrin Gelfert who helped us to gain a better understanding of Gibbs measures. We also thank the reviewers and the AE for their thoughtful comments leading to an improvement of the paper.%

\end{document}